\documentclass[reqno, 11pt]{article}

\usepackage{enumerate}
\usepackage{needspace}
\usepackage{latexsym}
\usepackage{amsmath}
\usepackage{amsfonts}
\usepackage{amsthm}
\usepackage{amssymb,mathtools}
\usepackage{newlfont}
\usepackage{graphics}
\usepackage{color}
\usepackage{float}
\usepackage{diagbox}
\usepackage{tocloft}
\usepackage{titlesec}
\usepackage{booktabs,array,longtable,enumitem,microtype}
\usepackage{extpfeil}
\usepackage{centernot}

\usepackage[linesnumbered,ruled,vlined]{algorithm2e}
\usepackage{url}

\usepackage[margin=1in]{geometry}

\usepackage[T1]{fontenc}
\usepackage{lmodern}
\usepackage{microtype}
\usepackage{rotating}
\usepackage{multirow}
\usepackage{extarrows}
\usepackage[sort,compress,numbers]{natbib}
\usepackage[utf8]{inputenc}
\usepackage{xcolor}
\usepackage{listings}
\usepackage{aliascnt}
\usepackage[pagebackref,colorlinks=true,linkcolor=blue,citecolor=red,urlcolor=blue]{hyperref}
\usepackage[nameinlink,noabbrev,capitalize]{cleveref}
\numberwithin{equation}{section}

\setlist[enumerate]{label=\textup{(\roman*)},leftmargin=*}
\newtheorem{theorem}{Theorem}[section]
\newaliascnt{proposition}{theorem}
\newtheorem{proposition}[proposition]{Proposition}
\aliascntresetthe{proposition}
\newaliascnt{lemma}{theorem}
\newtheorem{lemma}[lemma]{Lemma}
\aliascntresetthe{lemma}
\newaliascnt{corollary}{theorem}

\aliascntresetthe{corollary}

\theoremstyle{definition}
\newaliascnt{definition}{theorem}
\newtheorem{definition}[definition]{Definition}
\aliascntresetthe{definition}
\newaliascnt{remark}{theorem}
\newtheorem{remark}[remark]{Remark}
\aliascntresetthe{remark}
\newaliascnt{example}{theorem}
\newtheorem{example}[example]{Example}
\aliascntresetthe{example}
\newaliascnt{conjecture}{theorem}
\newtheorem{conjecture}[conjecture]{Conjecture}
\aliascntresetthe{conjecture}
\newaliascnt{question}{theorem}

\aliascntresetthe{question}
\newaliascnt{problem}{theorem}

\aliascntresetthe{problem}
\newaliascnt{alg}{theorem}

\aliascntresetthe{alg}

\allowdisplaybreaks[4]

\SetKwInput{KwInput}{Input}                % Set the Input
\SetKwInput{KwOutput}{Output}              % set the Output

\newcommand{\Z}{\mathbb Z}
\newcommand{\R}{\mathbb R}

\newcommand{\ehr}{\operatorname{ehr}}

\newcommand{\Id}{I}
\newcommand{\EE}{\mathcal E}
\newcommand{\Aop}{\mathcal A}
\newcommand{\BB}{\mathcal B}
\newcommand{\DD}{\mathcal D}
\newcommand{\NN}{\mathcal N}

\title{Ehrhart positivity for paving positroids}

\author{Feihu Liu$^{\color{blue} \dag}$ and Zihao Zhang$^{\color{blue} \S}$
\\[2mm]
{\small $^{\color{blue} \dag}$ Center for Combinatorics, LPMC,}\\[-0.8ex]
{\small Nankai University, Tianjin 300071, P.R.~China}\\
{\small $^{\color{blue} \S}$ School of Mathematics and Statistics,}\\[-0.8ex]
{\small Beijing Institute of Technology, Beijing 102400, P.R.~China}\\
{\small {\color{blue} $^\dag$} Email address: liufeihu7476@163.com}\\
{\small {\color{blue} $^\S$} Email address: zihao-zhang@foxmail.com}\\
}

\date{\today}

\begin{document}

\maketitle

\begin{abstract}
A central conjecture of Ferroni, Jochemko, and Schr\"{o}ter posits that every positroid is Ehrhart positive. While Ehrhart positivity has recently been verified for notable classes such as lattice path matroids and series-parallel matroids, the general conjecture remains open. In this paper, we prove that the base polytope of any paving positroid has positive Ehrhart coefficients. This establishes a new class of Ehrhart positive positroids and provides substantial evidence supporting this conjecture.
\end{abstract}

\noindent
\begin{small}
\emph{2020 Mathematics subject classification}: Primary 52B20; Secondary 05B35, 52B40.
\end{small}

\noindent
\begin{small}
\emph{Keywords}: Ehrhart polynomial; Ehrhart positivity; Paving matroid; Positroid; Matroid base polytope.
\end{small}

%\tableofcontents

\section{Introduction}

A \emph{lattice polytope} is the convex hull of finitely many points of $\mathbb{Z}^d$.
Throughout this paper, we assume that $d\geq 1$.
Let $P\subset\mathbb{R}^d$ be a full-dimensional lattice polytope, i.e., $\dim P=d$. 
For $t\in\mathbb{N}$, its \emph{lattice-point enumerator}
$$\ehr_P(t)=|tP\cap\Z^d|.$$
counts the number of integer points in the $t$-th dilation $tP = \{ t\alpha : \alpha \in P \}$ of $P$.

Ehrhart~\cite{Ehrhart1962} proved that the function $\ehr_P(t)$ is a polynomial in $t$ of degree $d$ with constant term $1$.
The resulting polynomial is called the \emph{Ehrhart polynomial} of $P$.
Let $\ehr_P(t)=c_dt^d+\cdots+ c_1t+ c_0$.
It is well known \cite[Corollary 3.20; Theorem 5.6]{BeckRobins2015} that $c_d$ equals the volume of $P$, $c_{d-1}$ equals half of the boundary volume of $P$, and the constant term is $c_0=1$. Therefore, these three coefficients are always positive.
The remaining coefficients, however, are more intricate and lack a straightforward geometric interpretation \cite{McMullen77}.

A lattice polytope $P$ is said to be \emph{Ehrhart positive} if $c_i>0$ for all $i$.
Ehrhart positivity has attracted considerable attention in recent years. 
Many families of polytopes have been shown to be Ehrhart positive, including lattice path matroids \cite{FMP2026} 
(which encompass the hypersimplices~\cite{Ferroni2021}, minimal matroids~\cite{Ferroni2022Minimal}, Schubert matroids~\cite{Fan-Li}, Catalan matroids~\cite{Chen-Li-Yao}), rank-two matroids~\cite{FerroniJochemkoSchroter2022}, cross-polytopes~\cite[Section~2]{Liu2019}, the $y$-families of generalized permutohedra~\cite{Postnikov2009} (which encompass the Pitman--Stanley polytopes~\cite{Stanley-Pitman}), and cyclic polytopes~\cite{LiuFuA-M}.
Recently, Li and Shao \cite{LiShao2026} showed that Series-parallel matroids are Ehrhart positive.
See \cite{Liu2019} for a survey of Ehrhart positivity.

Reeve's tetrahedra provide the classical non-Ehrhart positive examples \cite{Reeve1957}.
Other examples that are not Ehrhart positive include order polytopes~\cite{LiuTsuchiya2019,LiuXinZhang2026} and smooth polytopes~\cite{CastilloEtAl2018}, among others \cite{Hibi-Higashitani-Tsuchiya-Yoshida,LiuTaoXin,LiuTaoXinSign}.
Specifically, Ferroni \cite{Ferroni2022Counterexample} proved that matroid base polytopes are not necessarily Ehrhart-positive.
Indeed, he constructed sparse paving matroids with negative Ehrhart coefficients. Thus even the paving hypothesis alone does not imply positivity.

This paper focuses on the Ehrhart positivity of paving positroid. 
Jiang gave a combinatorial formula for the $h^*$-polynomials of connected positroid polytopes \cite[Theorem~1.1]{Jiang2025}.
We now present the relevant definitions.

A \emph{matroid} $M$ is a pair $(E, \mathcal{B})$ where $E$ is finite set and $\mathcal{B}$ is family of subsets of $E$, i.e. $\mathcal{B} \subseteq 2^E$ that satisfies the following two conditions: (i): $\mathcal{B} \neq \emptyset$. 
(ii): For each $B_1 \neq B_2$ members of $\mathcal{B}$ and $a \in B_1 \setminus B_2$, there exists an element $b \in B_2 \setminus B_1$ such that $(B_1 \setminus \{a\}) \cup \{b\} \in \mathcal{B}$.
We refer to Condition (ii) as the \emph{basis exchange property}.
The elements of $\mathcal{B}$ are called the \emph{basis} elements of $M$.
Note that all the bases of a matroid have the same cardinality.

Let $M = (E, \mathcal{B})$ be a matroid.
\begin{itemize}
\item A subset $I \subseteq E$ is called \emph{independent} if $I$ is contained in some \(B \in \mathcal{B}\). 
      When a subset of $E$ is not independent, we say it is \emph{dependent}.
    
\item A dependent subset $C \subseteq E$ is called a \emph{circuit} if every proper subset of $C$ is independent.
      In other words, a circuit is a minimal dependent set.

\item For any $A \subseteq E$, the \emph{rank} of $A$, denoted by $r_M(A)$, is defined as
    $$r_M(A) = \max_{B \in \mathcal{B}} |A \cap B|.$$
    In other words, the rank $r_M(A)$ of $A\subseteq E$ is the largest size of an independent subset of $A$.
    The \emph{rank} of $M$ is defined to be $r(M):=r_M(E)$. The \emph{corank} of $M$ is $|E|-r(M)$.
    The closure of $A$ is $\operatorname{cl}_M(A)=\{e\in E:r_M(A\cup\{e\})=r_M(A)\}$.
    
\item A subset $F \subseteq E$ is called a \emph{flat} if $r_M(F \cup \{e\}) > r_M(F)$ for every $e \notin F$.

\item A flat $H$ satisfying $r_M(H) = r_M(E) - 1$ is called a \emph{hyperplane}. A hyperplane is dependent if its size is at least $r(M)$.

\item An element $e\in E$ is a \emph{loop} if it belongs to no basis, and a \emph{coloop} if it belongs to every basis.
\end{itemize}

The \emph{direct sum} of matroids on disjoint sets is defined by taking as its bases the unions of one basis from each summand. 
A matroid is \emph{connected} if it is not a direct sum of two matroids on nonempty sets. 
The \emph{connected components} of a matroid are the summands in its unique decomposition into connected matroids.

The \emph{uniform matroid} $U_{r,n}$, where $0\leq r\leq n$, has ground set $[n]=\{1, 2, \ldots,n\}$ and all $r$-element subsets as bases.
A rank-$r$ matroid is \emph{paving} if each circuit has at least $r$ elements. 
In particular, every set of at most $r-1$ elements is independent. 
Postnikov \cite{Postnikov-positroid} introduced the definition of positroid in the context of studying totally nonnegative Grassmannian.
A real matrix $A \in \mathbb{R}^{r \times n}$ of rank $r$ is said to be \emph{totally nonnegative} if all of its $r \times r$ minors $\Delta_I(A)$ are nonnegative, where $I \in \binom{[n]}{r}$ is assumed to be ordered in the standard increasing fashion $i_1 < i_2 < \dots < i_r$. 
A \emph{positroid} of rank $r$ on the set $[n]$ is defined as the column matroid $M$ realized by a totally nonnegative matrix $A$. Equivalently, the basis $\mathcal{B}(M)$ of $M$ consists precisely of the $r$-element subsets of columns corresponding to the positive $r\times r$ minors of $A$.
In this context, the underlying linear order on $[n]$ is canonically viewed cyclically modulo $n$, reflecting the fundamental property that the class of positroids is invariant under cyclic shifts of the ground set, see \cite{ARW2016}.

Let $E$ be a finite set of cardinality $n$, which we canonically identify with $[n] = \{1, 2, \ldots, n\}$. 
For a vector $x \in \mathbb{R}^E$ and a subset $A \subseteq E$, we employ the standard shorthand $x(A)= \sum_{e \in A} x_e$. 
The indicator vector $\mathbf1_A\in\R^E$ has coordinate $1$ on $A$ and coordinate $0$ on $E\setminus A$.
Given a matroid $M$ of rank $r$ on the ground set $E$, its \emph{base polytope} $P(M)$ is defined as the convex hull of the indicator vectors of its bases:
$$P(M)= \operatorname{conv} \{ \mathbf{1}_B \mid B \text{ is a basis of } M \} \subset \mathbb{R}^n.$$
The \emph{hypersimplex} (also \emph{uniform matroid polytope}) $\Delta(r, n)$ is given by
$$\Delta(r, n) = \left\{ x \in [0, 1]^n \;\middle\vert\; x([n]) = r \right\}.$$
The hypersimplex is $P(U_{r,n})$.
Finally, we denote the Ehrhart polynomial of the base polytope $P(M)$ by $\ehr_M(t)$.

We use the rank description of a matroid base polytope:
\[P(M)=\{x\in\R_{\geq0}^{E}:x(E)=r(M),\ x(A)\leq r_M(A)\text{ for all }A\subseteq E\}.
\]
This is the standard base polytope theorem, see \cite[Proposition~5.4]{ARW2016} or \cite[Proposition 2.3]{Feichtner05}.
The standard dimension formula is $\dim P(M)=|E|-c(M)$, where $c(M)$ is the number of connected components
\cite[Section~2.2]{HanelyEtAl2023}. The face $x(A)=r_M(A)$ is the base polytope of $(M|A)\oplus(M/A)$
\cite[Proposition~5.3]{ARW2016}.

For $A\subseteq E$, the \emph{restriction} $M|A$ has ground set $A$ and the independent sets of $M$ contained in $A$. 
The \emph{deletion} is $M\setminus A=M|(E\setminus A)$. 
The \emph{contraction} $M/A$ has ground set $E\setminus A$ and rank function $r_{M/A}(X)=r_M(X\cup A)-r_M(A)$ for $X\subseteq E\setminus A$.
If $B_0$ is independent, the bases of $M/B_0$ are the sets $B\setminus B_0$ for bases $B$ of $M$ containing $B_0$.
A minor is a matroid obtained by a sequence of deletions and contractions.
We refer to \cite[Chapters~1--4]{Oxley2011} for these matroid operations and connectivity facts.

The main motivation for this paper comes from the following conjecture.

\begin{conjecture}(Ferroni--Jochemko--Schr\"oter; \cite[Conjecture~6.3]{FerroniJochemkoSchroter2022})
All positroids are Ehrhart positive.
\end{conjecture}

Ferroni, Jochemko, and Schr\"oter \cite[Theorem 1.2]{FerroniJochemkoSchroter2022} proved Ehrhart positivity for all positroids of rank two.
Recently, two important subclasses of positroids have been shown to be Ehrhart positive: 
lattice path matroids, by Ferroni, Morales, and Panova \cite[Theorem~1.2]{FMP2026}, and all series-parallel matroids, by Li and Shao \cite[Theorem~1.3]{LiShao2026}.

The main result of this paper is that the following subclass is Ehrhart positive.

\begin{theorem}\label{thm:main}
Every paving positroid is Ehrhart positive. More precisely, if $M$ is a paving positroid and $d=\dim P(M)$, then
$$\ehr_M(t)=\sum_{j=0}^{d}c_jt^j, \qquad c_j>0\quad\text{for }0\leq j\leq d.$$
\end{theorem}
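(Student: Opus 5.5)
The plan is to first pin down the structure of paving positroids, then use the known Ehrhart formula for paving matroids (relaxations of the uniform matroid) to reduce positivity to an explicit comparison.

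\textbf{Step 1: structure.} Loops and coloops can be removed, since a coloop contributes a factor that does not change the Ehrhart polynomial. Direct sums multiply Ehrhart polynomials, and products of positive polynomials are positive. So we may assume $M$ is connected, of rank $r\ge 2$ (rank one is a simplex and rank two is covered by \cite[Theorem 1.2]{FerroniJochemkoSchroter2022}). For a paving matroid every dependent hyperplane $H$ has size at least $r$, and distinct dependent hyperplanes meet in at most $r-2$ elements.

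The positroid condition forces strong cyclic structure. I will show that every dependent hyperplane of a connected paving positroid is a cyclic interval of $[n]$. The argument: a positroid's flats that are connected restrictions with connected contraction are cyclic intervals (the non-crossing/cyclic-interval characterization of positroid flats, e.g.\ from Ardila--Rincón--Williams \cite{ARW2016}). In a paving matroid the restriction to a dependent hyperplane is uniform of rank $r-1$ on at least $r$ elements, hence connected. Using this and the intersection bound, the dependent hyperplanes are intervals $H_1,\dots,H_k$ whose pairwise overlaps have size at most $r-2$.

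\textbf{Step 2: Ehrhart formula.} By Ferroni's relaxation formula, a paving matroid is obtained from $U_{r,n}$ by "un-relaxing" its stressed hyperplanes. Then
\[\ehr_M(t)=\ehr_{\Delta(r,n)}(t)-\sum_{i=1}^k \big(\ehr_{\Delta(r,n)\cap\{x(H_i)>r-1\}}(t)\big).\]
Each cut-off piece is a lattice polytope depending only on $(r,n,|H_i|)$, and its Ehrhart polynomial has an explicit formula (Ferroni's $\ell_{r,n,h}$-type polynomials). The cut regions for distinct hyperplanes are disjoint because $|H_i\cap H_j|\le r-2$.

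Hence $\ehr_M$ equals $\ehr_{U_{r,n}}$ minus a sum of polynomials $\ell_{r,n}^{h_i}(t)$. The whole problem becomes a coefficientwise inequality.

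\textbf{Step 3: the main obstacle.} This is bounding $\sum_i \ell^{h_i}$ coefficientwise by $\ehr_{\Delta(r,n)}$. Ferroni's counterexamples show that this fails when there are very many hyperplanes, so the positroid constraint must be used quantitatively. The intervals $H_i$ are cyclic, of length at least $r$, and overlap in at most $r-2$ elements, so $\sum_i (|H_i|-r+2)\le n$. In particular $k\le n/2$, and the total "excess" is linear in $n$.

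I would then:
\begin{enumerate}
\item write both sides in a common basis, such as binomial-type products $\binom{t+j}{j}$ or the $h^*$/Eulerian-type expansions Ferroni uses;
\item show that each $\ell^{h}$ has nonnegative coefficients bounded by those of an explicit "one-interval" comparison term;
\item show that the linear-in-$n$ budget of intervals keeps the total below the hypersimplex coefficients.
\end{enumerate}

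A cleaner alternative that I would try first is to realize $M$ directly as a lattice path matroid, or as a series-parallel matroid, or to build it from such matroids and uniform matroids via operations preserving positivity. Cyclic-interval hyperplanes are exactly the shape appearing in lattice path matroids. If every paving positroid with interval hyperplanes is, up to cyclic shift, a lattice path matroid, then \cite[Theorem 1.2]{FMP2026} finishes the proof immediately.

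If that identification fails, for instance with hyperplanes wrapping around cyclically so that no single shift linearizes them, I fall back on the coefficient estimates of Step 3. I would induct on $k$, removing one interval at a time and checking that the remaining polynomial stays positive.
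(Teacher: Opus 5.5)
\textbf{There is a genuine gap: Step 3 is where the proof would have to happen, and it is not carried out.} Your Steps 1 and 2 are sound and agree with \cref{lem:geometry} and the cap-subtraction formula the paper starts from. But Step 3 is only a list of intentions, and the one quantitative input you name cannot carry it. The budget $\sum_i(|H_i|-r+2)\le n$ is true: order the intervals by left endpoint; the consecutive gaps sum to $n$, and the $i$-th gap is at least $|H_i|-r+2$. However, it forgets the information that matters.

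Take $r\ge3$, $n=2r$ and two sizes $h_1=h_2=n-2$. Then $\sum_i(h_i-r+2)=n$ and $n-h_i=2$. Yet the involution $x\mapsto\mathbf 1-x$ of $\Delta(r,2r)$ swaps $\{x_a+x_b<1\}$ with $\{x_a+x_b>1\}$. So each cap has exactly half the volume, and $\ehr_{\Delta(r,n)}-\ell^{h_1}-\ell^{h_2}$ has leading coefficient $0$. This profile is not realizable, since two $(2r-2)$-subsets of a $2r$-set meet in at least $2r-4>r-2$ elements. Hence item (iii) is false as stated. A correct estimate must use the disjointness of the caps itself. For realizable profiles the desired inequality is literally the theorem, by your own Step 2, so items (i)--(iii) contain the entire difficulty rather than reducing it.

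The induction on $k$ adds nothing. Positivity of the polynomial after relaxing one interval says nothing about what happens after subtracting that interval's cap polynomial; that subtraction is exactly how Ferroni's counterexamples arise. So the induction step needs precisely the missing quantitative margin.

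\textbf{The fallback does not apply either.} The polytope of a lattice path matroid is cut out of the hypersimplex by prefix-sum bounds. By the facet argument of \cref{lem:geometry}, every dependent hyperplane of a connected paving lattice path matroid is then a prefix or a suffix. Two prefixes are nested and would meet in at least $r$ elements, so there are at most two dependent hyperplanes. The rank-$3$ whirl on $\{1,\ldots,6\}$, with lines $\{1,2,3\},\{3,4,5\},\{5,6,1\}$, is a connected paving positroid with three. Contracting $2$ gives a rank-two matroid with four parallel classes, hence a $U_{2,4}$-minor, so it is not series-parallel either. You also name no positivity-preserving operation beyond direct sums.

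\textbf{How the paper avoids this.} It makes no numerical comparison at all. It roots at a coordinate $e$ and writes $\ehr_M(t)=G_e(t,1)+t\int_0^1\Gamma_v(G_e)\,dv$. It then realizes $\Gamma_{w,\alpha,\beta}(G_e)$ as a kernel of the path expansion of $\Aop^q$. In that expansion each hyperplane is a constraint on a consecutive block of jumps, which is where the common cyclic order is essential. The boundary corrections $Q_{p,r-1}$ and $Q_p^{\mathrm{low}}$ have nonnegative coefficients. Disjointness of the caps turns the subtraction into the factor $1-\sum_i\mathbf 1_{\{\text{violates } i\}}\in\{0,1\}$ on each integration domain, so nonnegativity holds term by term. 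Strict positivity in every degree then comes from connected coordinate faces (\cref{lem:faces}).
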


Machacek and Nasr described coloopless paving positroids by Le-diagrams \cite[Theorem~5.8 and Lemma~5.11]{MachacekNasr2024v2}.
Their work on sparse paving positroids gives characterizations by Grassmann necklaces, decorated permutations, and Le-diagrams, together with an enumeration \cite[Theorems~3.2, 4.2, and~5.4]{MachacekNasr2026Sparse}.
The geometric description used here follows from the cyclic interval facet characterization of positroids.

The starting point is the established description of a paving base polytope as a hypersimplex with regions removed at its dependent
hyperplanes. Hanely et al. computed the corresponding Ehrhart polynomials \cite[Proposition~5.10 and Theorem~5.11]{HanelyEtAl2023}.
Deligeorgaki, McGinnis, and Vindas-Mel\'endez proved positivity for panhandle matroids \cite[Theorems~1.3]{DMV2026}.
For the Ehrhart polynomial of paving matroid, they \cite[Theorems~1.4, Section~5]{DMV2026} give a coefficient-wise upper bound by the hypersimplex. However, they do not give a positive lower bound.

The paper is organized as follows. 
\cref{sec:geometry} establishes the cyclic interval constraints
and disjoint forbidden regions, and introduces a running example.
\cref{sec:integrals} gives the root-summation identity and a
periodic integral representation of the binomial transform. Each
coordinate contributes a zero or unit step with an evaluation weight,
or an integrated step in $[0,1]$ with a factor of $t$.
\cref{sec:constraints} imposes each interval constraint on a
consecutive block of steps and computes its nonnegative boundary
correction. It proves polynomiality of the rooted counts and uses
disjointness in the common path representation to obtain coefficient
nonnegativity. \cref{sec:main-proof} recovers the Ehrhart polynomial
by root summation and uses connected coordinate faces to supply a
positive integral in every required degree. All integrals are Riemann
integrals over finite intervals or polytopes.

\section{The geometry of paving positroids}\label{sec:geometry}

A cyclic interval consists of consecutive elements in the cyclic order; it may cross the chosen first element. 
Ardila, Rinc\'on, and Williams states that a matroid in a specified order is a positroid if and only if its base polytope is described, in the hyperplane $x(E)=r(M)$, by upper bounds on cyclic interval sums
\cite[Proposition~5.6]{ARW2016}.

\begin{lemma}\label{lem:geometry}
Let $M$ be a connected rank-$r$ paving positroid on a cyclically ordered
set $E$ of size $n$, where $2\leq r\leq n-2$. Let
$H_1,\ldots,H_\ell$ be its hyperplanes of size at least $r$. Then
\[
 P(M)=\{x\in\Delta(r,n):x(H_i)\leq r-1
                                  \text{ for }1\leq i\leq\ell\}.
\]
Each $H_i$ is an interval in the same cyclic order, and
\[
 |H_i\cap H_j|\leq r-2\qquad(1\leq i<j\leq\ell).
\]
The regions $x(H_i)>r-1$ inside $\Delta(r,n)$ are pairwise disjoint.
\end{lemma}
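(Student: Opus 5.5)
We establish the three assertions in turn, starting from the rank description of $P(M)$ recalled in the introduction and then specializing with the paving hypothesis and the Ardila--Rinc\'on--Williams interval criterion \cite[Proposition~5.6]{ARW2016}.

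\emph{The inequality description.} Since $M$ is connected, $P(M)$ has dimension $n-1$. Its facets in the hyperplane $x(E)=r$ are therefore given by a subset of the inequalities $x(A)\le r_M(A)$, together with $x_e\ge 0$. We first show that every inequality $x(A)\le r_M(A)$ is implied by $x\in\Delta(r,n)$ together with the inequalities $x(H_i)\le r-1$.
\begin{itemize}
\item If $|A|\le r-1$, then $A$ is independent because $M$ is paving. The inequality reads $x(A)\le |A|$, which follows from $x\le 1$.
\item Suppose $|A|\ge r$ and $r_M(A)=r$. The inequality becomes $x(A)\le r=x(E)$, which follows from $x\ge 0$.
\item Suppose $|A|\ge r$ and $r_M(A)\le r-1$. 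Every $r$-subset of $A$ is then dependent, so it contains a circuit of size at least $r$. Hence every $(r-1)$-subset of $A$ is independent, so $r_M(A)=r-1$. The closure $\operatorname{cl}_M(A)$ is then a hyperplane of size at least $r$, that is, some $H_i$. The inequality $x(A)\le r-1$ follows from $x(H_i)\le r-1$ and $x\ge 0$.
\end{itemize}
The reverse inclusion is immediate, since each indicator vector $\mathbf 1_B$ of a basis satisfies $|B\cap H_i|\le r_M(H_i)=r-1$. This gives the displayed equality.

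\emph{Each $H_i$ is a cyclic interval.} We show that $x(H_i)\le r-1$ defines a facet and then invoke ARW. The face $x(H_i)=r-1$ is $P\big((M|H_i)\oplus(M/H_i)\big)$. Here $M|H_i$ has rank $r-1$ on at least $r$ elements with all $(r-1)$-subsets independent, so it is a uniform matroid $U_{r-1,|H_i|}$. Since $|H_i|\ge r>r-1\ge 1$, this uniform matroid is connected. $M/H_i$ is rank one on $E\setminus H_i$ and is loopless because $H_i$ is a flat. It is therefore $U_{1,n-|H_i|}$, which is connected as long as $|E\setminus H_i|\ge 1$ (and if that set is a singleton it is still a single component). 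The face thus has $2$ components and dimension $n-2$, so it is a facet.

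By ARW, the facets of a positroid polytope other than the coordinate facets are cut out by cyclic interval inequalities $x(I)\le r_M(I)$. Two inequalities $x(A)\le a$ and $x(A')\le a'$ define the same facet only if they agree modulo $x(E)=r$. This happens either when $A'=A$, or when $A'=E\setminus A$ with the complementary bound $x(E\setminus A)\ge r-a$, which is a lower-bound form. So for a given facet, either $H_i$ itself is the interval, or the facet is presented through its complement via a lower-bound inequality. Since the complement of a cyclic interval is again a cyclic interval, $H_i$ is a cyclic interval in either case. The main obstacle is exactly this identification of facet-defining sets up to complement and coordinate facets. One also has to rule out the possibility that the facet coincides with a coordinate facet $x_e\ge 0$. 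That would force $E\setminus H_i=\{e\}$ with $e$ a coloop, and a coloop cannot exist because $M$ is connected with $r\le n-2$.

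\emph{Intersection bound and disjointness.} If $i\ne j$, then $H_i\cap H_j$ is a flat properly contained in $H_i$, so $r_M(H_i\cap H_j)\le r-2$. Every set of size at most $r-1$ is independent, so if $|H_i\cap H_j|\ge r-1$ the intersection would have rank at least $r-1$, a contradiction. Hence $|H_i\cap H_j|\le r-2$.

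For disjointness of the regions, suppose $x\in\Delta(r,n)$ satisfies both $x(H_i)>r-1$ and $x(H_j)>r-1$. Then
\[
x(H_i\cup H_j)=x(H_i)+x(H_j)-x(H_i\cap H_j)>2(r-1)-(r-2)=r,
\]
using $x\le 1$ to get $x(H_i\cap H_j)\le r-2$. This contradicts $x(H_i\cup H_j)\le x(E)=r$. So no point of $\Delta(r,n)$ lies in two of the regions, and the regions are pairwise disjoint.
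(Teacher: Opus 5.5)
Your proof is correct and follows essentially the same route as the paper: you reduce the rank description using the paving property, show that $x(H_i)=r-1$ is a facet because it is $P(U_{r-1,|H_i|}\oplus U_{1,n-|H_i|})$, match its normal against the Ardila--Rinc\'on--Williams interval inequalities, and get disjointness from $x(H_i)+x(H_j)\le r+|H_i\cap H_j|\le 2r-2$. The one step you assert without proof is that proportionality of $\mathbf 1_{H_i}$ and $\mathbf 1_J$ modulo $\mathbf 1_E$ forces $J=H_i$ or $J=E\setminus H_i$; this is exactly the paper's observation that $a\mathbf 1_J+b\mathbf 1_E$ takes only the values $b$ and $a+b$, which must be $0$ and $1$. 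Since the ARW description uses only upper bounds (with $x_e\ge 0$ appearing as $x(E\setminus\{e\})\le r$), your complement and coordinate-facet cases never actually arise.
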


\begin{proof}
If $r_M(A)\leq r-2$ and $A$ were dependent, a basis of $A$ together
with one additional element would contain a circuit of size at most
$r_M(A)+1<r$. Hence such an $A$ is independent, and its rank inequality
follows from $x_e\leq1$. A set of rank $r-1$ is contained in a
hyperplane. A hyperplane of size $r-1$ gives another consequence of
the coordinate bounds, whereas the remaining hyperplanes are precisely
$H_1,\ldots,H_\ell$. The inequality for a spanning set follows from
nonnegativity and $x(E)=r$. The rank description therefore gives the
displayed description of $P(M)$.

Every $(r-1)$-set is independent and its closure is its unique
containing hyperplane. Distinct hyperplanes therefore intersect
in at most $r-2$ elements.

For each $H_i$, the restriction is $U_{r-1,|H_i|}$ and the contraction
is $U_{1,n-|H_i|}$. Connectedness implies $n-|H_i|\geq2$: a singleton
complement would be a coloop. The face $x(H_i)=r-1$ is the product of
the two uniform base polytopes and has dimension
$(|H_i|-1)+(n-|H_i|-1)=n-2$.
It is therefore a facet. The cyclic interval description of $P(M)$
must include an inequality supporting this facet. Its outward normal,
modulo the normal to $x(E)=r$, gives
\[
 \mathbf1_{H_i}=a\mathbf1_J+b\mathbf1_E
 \qquad\text{with }a>0
\]
for a proper nonempty cyclic interval $J$. The two values on the
right are $b$ and $a+b$; they must be $0$ and $1$ in that order.
Thus $a=1$, $b=0$, and $H_i=J$.

Finally, for distinct $H_i,H_j$ and $x\in\Delta(r,n)$,
\[
 x(H_i)+x(H_j)
 =x(H_i\cup H_j)+x(H_i\cap H_j)
 \leq r+|H_i\cap H_j|\leq2r-2.
\]
The two sums cannot both exceed $r-1$. 
The same disjoint cap geometry appears in the paving Ehrhart formulas of
\cite[Proposition~5.10]{HanelyEtAl2023}.
\end{proof}

For a fixed matroid $M$, an element $e$ is distinguished as the root.
The remaining elements are enumerated as $1,\ldots,q$ in cyclic order
after $e$, so $n=q+1$. Here $q$ denotes the number of nonroot elements;
it equals $\dim P(M)$ when $M$ is connected. For integers $t\geq1$
and $0\leq a\leq t$, the rooted count is
$F_e(a,t)=|\{x\in tP(M)\cap\Z^{q+1}:x_e=a\}|$.
When the matroid needs to be specified, the same count is denoted by
$F_{M,e}(a,t)$.

If $e\notin H_i$, then $H_i$ is a consecutive block in $1,\ldots,q$,
and its constraint is $x(H_i)\leq(r-1)t$. If $e\in H_i$, its complement
is a consecutive block in $1,\ldots,q$, and its constraint is
$x(E\setminus H_i)\geq t$. Polynomiality of this rooted count on the
whole integer triangle will be proved in \cref{lem:rooted}.

\begin{example}\label{ex:four-element}
Let $M$ be the column matroid of
\[
 \begin{pmatrix}
  1&1&1&0\\
  0&0&1&1
 \end{pmatrix}.
\]
The ordered $2\times2$ minors are all $1$ except the zero minor on
$\{1,2\}$. Thus $M$ is a loopless rank-two positroid, hence paving,
with bases all two-element subsets except $\{1,2\}$. The circuits
$\{1,2\}$ and $\{1,3,4\}$ force every element into the same
direct summand, so $M$ is connected. Its only dependent hyperplane is
$H=\{1,2\}$, so \cref{lem:geometry} gives
\[
 P(M)=\{x\in\Delta(2,4):x_1+x_2\leq1\}.
\]

Take $e=4$, with nonroot coordinates $1,2,3$ in this order.
For integers $t\geq1$ and $0\leq a\leq t$, set $s=x_1+x_2$.
The equations $x_4=a$ and $x_1+x_2+x_3+x_4=2t$ give
$x_3=2t-a-s$. The bound $0\leq x_3\leq t$ and the hyperplane
constraint give $t-a\leq s\leq t$.
For each such $s$, there are $s+1$ choices for $(x_1,x_2)$, and
their upper bounds follow from $s\leq t$. Hence
\[
 F_e(a,t)=\sum_{s=t-a}^{t}(s+1)
        =(a+1)(t+1)-\frac{a(a+1)}2.
\]
\end{example}

\section{Binomial transforms and periodic integral representations}\label{sec:integrals}

\subsection{The binomial transform}

In a lattice-point count, $t$ is a nonnegative integer. After a
polynomial extension has been established, $t$ is treated as an
indeterminate.
For a formal power series $A(z)$, the notation $[z^j]A(z)$ denotes the
coefficient of $z^j$, with $[z^j]A(z)=0$ for $j<0$. The same convention
is used for other series variables, including $v$ and $\lambda$.

Binomial coefficients are interpreted as polynomials in their upper
argument: $\binom{u}{j}=u(u-1)\cdots(u-j+1)/j!$ for integers
$j\geq1$, and $\binom{u}{0}=1$.

For a polynomial $F(a,t)$ of degree at most $d$ in $a$, there is a
unique expansion
\[
 F(a,t)=\sum_{s=0}^{d}h_s(t)\binom{a+s}{s}.
\]
Its \emph{binomial transform} is the polynomial
$G(t,X)=(\NN F)(t,X)=\sum_{s=0}^{d}h_s(t)X^s$.
When $F_e$ has a polynomial extension, we denote its transform by
$G_e(t,X)=(\NN F_e)(t,X)$.
If $F$ has total degree at most $d$, then
$\deg_t h_s\leq d-s$. This follows by successively comparing the
coefficients of $a^d,a^{d-1},\ldots,1$ in the expansion.
For an integer $t\geq0$, summation over the root coordinate gives
\[
 \sum_{a=0}^{t}F(a,t)
     =\sum_{s=0}^{d}h_s(t)\binom{t+s+1}{s+1}.
\]
The identity follows from
$\binom{a+s}{s}=\binom{a+s+1}{s+1}-\binom{a+s}{s+1}$
and telescoping over $0\leq a\leq t$.

The functional that will recover the Ehrhart polynomial is
\[
 \Gamma_v(G)=\sum_{s=0}^{d}h_s(t)[\lambda^s]
       \frac{(1-\lambda(1-v))^{t-1}}{(1-\lambda)^{t+1}},
 \qquad 0\leq v\leq1.
\]
The powers are expanded at $\lambda=0$. Only finitely many
coefficients in $\lambda$ occur, and each is a polynomial in $t$ and $v$.

\begin{lemma}\label{lem:root-summation}
For $G(t,X)=\sum_{s=0}^{d}h_s(t)X^s$, one has
\[
 \sum_{s=0}^{d}h_s(t)\binom{t+s+1}{s+1}
       =G(t,1)+t\int_0^1\Gamma_v(G)\,dv.
\]
\end{lemma}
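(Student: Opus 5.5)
By linearity in $G$, it suffices to prove the identity for a single monomial $G=X^s$. In that case $h_s(t)=1$, all other $h$'s vanish, and $G(t,1)=1$. So the claim reduces to
\[
 \binom{t+s+1}{s+1}=1+t\int_0^1[\lambda^s]\frac{(1-\lambda(1-v))^{t-1}}{(1-\lambda)^{t+1}}\,dv
 \qquad(s\geq0).
\]
Both sides are polynomials in $t$. The left side is a polynomial by the paper's convention for binomial coefficients. On the right, each $\lambda$-coefficient of the binomial series is a polynomial in $t$ and $v$. It therefore suffices to verify the identity for all integers $t\geq1$; one could even keep $t$ formal throughout.

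The key step is to swap coefficient extraction with integration in $v$. This is legitimate because $[\lambda^s]$ of the integrand is a finite sum: it is the Cauchy product of finitely many terms, each a polynomial in $v$. So
\[
 \int_0^1[\lambda^s]\frac{(1-\lambda(1-v))^{t-1}}{(1-\lambda)^{t+1}}\,dv
 =[\lambda^s]\,\frac{1}{(1-\lambda)^{t+1}}\int_0^1(1-\lambda(1-v))^{t-1}\,dv .
\]
The inner integral is evaluated termwise in the formal power series ring in $\lambda$ with polynomial coefficients in $v$. For integer $t\geq1$ it is a genuine polynomial in $\lambda$. The substitution $w=1-\lambda(1-v)$, or equivalently a direct antiderivative in $v$, gives
\[
 \int_0^1(1-\lambda(1-v))^{t-1}\,dv=\frac{1-(1-\lambda)^t}{t\lambda}.
\]
Here the numerator has zero constant term, so the quotient by $\lambda$ is a legitimate power series. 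To avoid any worry about dividing by $\lambda$, I will verify this equality by differentiating in $v$, or by comparing coefficients of $\lambda^k$ on each side: both equal $(-1)^k\binom{t-1}{k}/(k+1)$.

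Multiplying by $t$ cancels the factor $1/t$, and we get
\[
 t\int_0^1\Gamma_v(X^s)\,dv=[\lambda^s]\frac{1-(1-\lambda)^t}{\lambda(1-\lambda)^{t+1}}
 =[\lambda^{s+1}]\Bigl((1-\lambda)^{-(t+1)}-(1-\lambda)^{-1}\Bigr)
 =\binom{t+s+1}{s+1}-1 .
\]
The last equality uses the negative binomial series $[\lambda^{m}](1-\lambda)^{-(t+1)}=\binom{t+m}{m}$ together with $[\lambda^{m}](1-\lambda)^{-1}=1$. Adding $G(t,1)=1$ gives the monomial identity, and summing against $h_s(t)$ gives the lemma.

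The only delicate point is the justification of the interchange and of the division by $\lambda$ in the formal setting. I expect this to be routine once one notes that every coefficient involved is a polynomial in $(t,v)$. Then both sides agree as polynomials in $t$ after checking at all integers $t\geq1$.
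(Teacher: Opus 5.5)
Your proposal is correct and is essentially the paper's argument: you reduce by linearity to $G=X^s$, integrate coefficientwise using $\partial_v(1-\lambda(1-v))^t=t\lambda(1-\lambda(1-v))^{t-1}$, and read off $[\lambda^{s+1}](1-\lambda)^{-t-1}=\binom{t+s+1}{s+1}$, with polynomiality in $t$ extending the identity to all $t$. The only difference is cosmetic: the paper adds $1/(1-\lambda)$ to the left side to absorb the term $G(t,1)$, whereas you subtract $[\lambda^{s+1}](1-\lambda)^{-1}=1$ on the right.
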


\begin{proof}
Differentiation with respect to $v$ gives the formal identity
\[
 \frac{\partial}{\partial v}(1-\lambda(1-v))^t
       =t\lambda(1-\lambda(1-v))^{t-1}.
\]
Integrating coefficientwise over $0\leq v\leq1$ yields
\[
 \frac1{1-\lambda}
 +t\int_0^1
       \frac{(1-\lambda(1-v))^{t-1}}{(1-\lambda)^{t+1}}\,dv
 =\frac{(1-\lambda)^{-t-1}-1}{\lambda}.
\]
The numerator on the right has zero constant term, so the quotient
is a formal power series. Its coefficient of $\lambda^s$ is
$\binom{t+s+1}{s+1}$. On the left this coefficient is
$1+t\int_0^1\Gamma_v(X^s)\,dv$.
Multiplication by $h_s(t)$ and summation prove the formula.
All extracted coefficients are polynomials, so the identity also
includes $t=0$.
\end{proof}

\cref{lem:rooted} will establish polynomiality of the rooted
count for the paving positroids under consideration. The root sum
will then take the form
\begin{equation}\label{eq:root-goal}
 \ehr_M(t)=G_e(t,1)+t\int_0^1\Gamma_v(G_e)\,dv.
\end{equation}
Thus coefficientwise nonnegativity of the local term $G_e(t,1)$
and of $\Gamma_v(G_e)$ will imply Ehrhart nonnegativity.
To prove these assertions while imposing the hyperplane constraints,
we will allow the integral data and the two endpoint values of the
functional to vary independently.

\begin{example}[\cref{ex:four-element}, continued]
The rooted polynomial has the binomial expansion
\[
 F_e(a,t)=(t+2)\binom{a+1}{1}-\binom{a+2}{2}.
\]
Its transform is therefore
\[
 G_e(t,X)=(t+2)X-X^2.
\]
The negative coefficient of $X^2$ shows why positivity cannot be
read directly from the coefficients of the binomial transform.
\end{example}

To construct the integral representation, first consider uniform
matroids. For integers $p\geq1$ and $1\leq k\leq p$, the rooted count of
$U_{k,p+1}$ is the number of $p$-tuples of integers in $[0,t]$ with sum $kt-a$.
Inclusion-exclusion gives the polynomial
\[
 F_{p,k}(a,t)=\sum_{j=0}^{k-1}(-1)^j\binom pj
                     \binom{(k-j)t-a+p-j-1}{p-1}.
\]
The displayed formula agrees with
the count for every integer $t\geq1$ and
$0\leq a\leq t$. Indeed, after $j$ specified coordinates exceeding
$t$ have been reduced by $t+1$, their remaining sum is
$N=(k-j)t-a-j$. Terms with $j\geq k$ have $N<0$ and are omitted.
For $0\leq j\leq k-1$, a nonnegative $N$ gives
$\binom{N+p-1}{p-1}$ by the elementary count of nonnegative integer
solutions. If $N<0$, then
$N\geq(k-j-1)t-j\geq-j\geq-(p-1)$. Hence
$0\leq N+p-1<p-1$, and the same binomial polynomial is zero.

For an integer $b\geq1$, the polynomial
\[
 f_b(T,X)=\sum_{s=0}^{b-1}(-1)^s
                         \binom{T+b}{b-1-s}X^s
\]
is the transform of $\binom{T-a+b-1}{b-1}$. Indeed,
\[
 \binom{T-a+b-1}{b-1}
 =[v^{b-1}](1+v)^{T+b}(1+v)^{-a-1}
 =\sum_{s=0}^{b-1}(-1)^s\binom{T+b}{b-1-s}\binom{a+s}{s}.
\]
The equivalent coefficient formulas are
\[
 f_b(T,X)
 =[v^{b-1}]\frac{(1+v)^{T+b}}{1+vX}
 =[\lambda^{b-1}]
       \frac{(1-\lambda)^{-T-1}}{1-\lambda(1-X)}.
\]
The second follows from $v=\lambda/(1-\lambda)$ and the formal
coefficient change
\[
 [v^d]g(v)=[\lambda^d](1-\lambda)^{d-1}
                                  g\!\left(\frac\lambda{1-\lambda}\right).
\]
To verify this coefficient change, it suffices to consider $g(v)=v^k$
for integers $k\geq0$. The right-hand side is
$[\lambda^{d-k}](1-\lambda)^{d-1-k}$. It is zero when $k>d$ by the
negative-degree convention, and it is $1$ when $k=d$. When $k<d$,
the polynomial $(1-\lambda)^{d-1-k}$ has degree less than $d-k$, so
the coefficient is zero. The two sides therefore agree on every
monomial. Only the first $d+1$ coefficients of a formal series can
contribute, which proves the identity for arbitrary $g$.

Linearity of $\NN$ now gives the transform of the uniform rooted count:
\[
 G_{p,k}(t,X)=\sum_{j=0}^{k-1}(-1)^j\binom pj
                                f_p((k-j)t-j,X).
\]
\begin{example}
For $M=U_{1,3}$, the two nonroot coordinates have sum $t-a$.
There are $t-a+1$ such pairs for $0\leq a\leq t$, and each coordinate
lies in $[0,t]$. Thus
$F_e(a,t)=t-a+1=(t+2)\binom{a}{0}-\binom{a+1}{1}$, and
$G_e(t,X)=t+2-X$.
\end{example}

\subsection{An ordinary integral kernel}

Let $B>0$ and let $\varphi:[0,B]\longrightarrow[0,1]$ be affine
on the open intervals of a finite partition, with independently assigned
values at the partition points. For bounded piecewise continuous $g$, set
$(Dg)(u)=\varphi(u)g(u)+t\int_u^B g(v)\,dv$.
All integrals are ordinary Riemann integrals or finite multiple Riemann
integrals. Compositions retain evaluation terms separately from integral
terms; the coefficient of $g(v)$ in the latter is the \emph{integral kernel}.
For every integer $j\geq1$,
\[
 (D^jg)(u)=\varphi(u)^jg(u)
             +\int_u^B K_{D^j}(u,v)g(v)\,dv.
\]
Composition defines the kernel pointwise. Each integration contributes
$t$, so $K_{D^{s+1}}$ is divisible by $t$ for $s\geq0$.
For example, $K_{D^2}(u,v)/t=\varphi(u)+\varphi(v)+t(v-u)$ for $u<v$:
the first two terms have one integration, and the last has two.

\begin{lemma}\label{lem:resolvent}
For $0\leq u<v\leq B$ and an integer $s\geq0$, the integral kernel
of $D^{s+1}$ satisfies
\[
 \frac{K_{D^{s+1}}(u,v)}t=[\lambda^s]R_\lambda(u,v),
 \qquad
 R_\lambda(u,v)=
 \frac{\exp\!\left(t\lambda\int_u^v
                         \frac{dw}{1-\lambda\varphi(w)}\right)}
      {(1-\lambda\varphi(u))(1-\lambda\varphi(v))}.
\]
The assigned values of $\varphi$ at $u$ and $v$ occur in the two
denominator factors, independently of the integral in the numerator.
\end{lemma}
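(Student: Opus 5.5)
We prove the identity by expanding $D^{s+1}$ as a sum over words and resumming with a generating function in $\lambda$. Write $D=\Phi+tI_B$, where $(\Phi g)(u)=\varphi(u)g(u)$ and $(I_Bg)(u)=\int_u^Bg(v)\,dv$. Expanding $(\Phi+tI_B)^{s+1}$ gives a sum over words in the two letters. A word with $m\geq1$ integration letters, applied to $g$ and evaluated at $u$, is an iterated integral over a chain $u=w_0<w_1<\cdots<w_m=v$. The last variable $w_m=v$ is the argument of $g$. Each integration step contributes a factor $t$. Before the first integration we have $k_0$ letters $\Phi$, contributing $\varphi(u)^{k_0}$; between consecutive integrations, $k_i$ letters contribute $\varphi(w_i)^{k_i}$. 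Since evaluation terms are kept separate, the values at the chain points $w_0=u$ and $w_m=v$ are the assigned pointwise values. The interior points $w_1,\dots,w_{m-1}$ are integrated, so their assigned values are irrelevant. Hence
\[
 K_{D^{s+1}}(u,v)=\sum_{m\geq1}t^m\!\!\sum_{k_0+\cdots+k_m=s+1-m}
 \int_{u<w_1<\cdots<w_{m-1}<v}\prod_{i=0}^{m}\varphi(w_i)^{k_i}\,dw_1\cdots dw_{m-1}.
\]

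Next we form the generating series $\sum_{s\geq0}\lambda^sK_{D^{s+1}}/t$. Summing over each $k_i$ turns $\varphi(w_i)^{k_i}\lambda^{k_i}$ into $1/(1-\lambda\varphi(w_i))$. The remaining weight is $\lambda^{m-1}t^{m-1}$, because the total $\lambda$-degree is $s=\sum k_i+m-1$. We then symmetrize the ordered simplex of interior points: with $\psi(w)=1/(1-\lambda\varphi(w))$,
\[
 \int_{u<w_1<\cdots<w_{m-1}<v}\prod_{i=1}^{m-1}\psi(w_i)
 =\frac1{(m-1)!}\Bigl(\int_u^v\psi\Bigr)^{m-1}.
\]
Summing over $m\geq1$ produces $\exp\!\bigl(t\lambda\int_u^v\psi\bigr)$, multiplied by the endpoint factors $\psi(u)\psi(v)$. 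This is exactly $R_\lambda(u,v)$. Extracting $[\lambda^s]$ proves the formula, and the final sentence of the statement is just the observation that only the endpoint factors see the assigned values.

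To make this rigorous, everything is treated as formal power series in $\lambda$. Each coefficient is a finite sum of Riemann integrals of products of piecewise affine functions, so the interchange of sums and integrals is coefficientwise and finite. The one subtle point is that $u$ and $v$ may be partition points, where the assigned value of $\varphi$ differs from its one-sided limits. Here we use that evaluation terms of a composition are recorded as their pointwise values, whereas the interior variables are integrated and changes on a finite set do not affect them.

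As a cross-check, and as an alternative route by induction on $s$, the kernels satisfy the recursion obtained from $D^{s+2}=D\circ D^{s+1}$:
\[
 K_{D^{s+2}}(u,v)=\varphi(u)K_{D^{s+1}}(u,v)+t\varphi(v)^{s+1}+t\int_u^vK_{D^{s+1}}(w,v)\,dw .
\]
In generating form this reads
\[
 R_\lambda(u,v)=\frac{\lambda\varphi(u)R_\lambda(u,v)+\psi(v)+t\lambda\int_u^vR_\lambda(w,v)\,dw}{1},
\]
that is, $(1-\lambda\varphi(u))R_\lambda=\psi(v)+t\lambda\int_u^vR_\lambda(w,v)\,dw$. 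One can check that $R_\lambda$ satisfies this by differentiating in $u$ away from partition points and verifying the value at $u=v$. The main obstacle is the bookkeeping at partition points, not the algebra. This is why the word-expansion argument, which isolates the endpoint evaluations explicitly, is the primary proof, with the recursion serving as confirmation, for instance against the stated $K_{D^2}$.
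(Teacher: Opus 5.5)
Your proof is correct, but it takes a different route from the paper's.

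\textbf{The paper's route.} The paper never expands $D^{s+1}$ into words. It derives the one-step recurrence $K_{D^{j+1}}(u,v)=\varphi(u)K_{D^j}(u,v)+t\varphi(v)^j+t\int_u^vK_{D^j}(w,v)\,dw$, which is exactly your cross-check. It then fixes a nonzero real $t$ and proves the bound $|K_{D^j}|\leq j|t|(1+|t|B)^{j-1}$, so that $\sum_j\lambda^jK_{D^j}$ converges for small real $\lambda$. The recurrence becomes an integral equation for $R_\lambda(\cdot,v)$, which it solves as a linear ODE for $Y(u)=\int_u^vR_\lambda(w,v)\,dw$ with an integrating factor, gluing at partition points by continuity of $Y$. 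Finally it passes to formal $t$ because each coefficient identity is polynomial in $t$.

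\textbf{What your route buys.} Your word expansion, together with symmetrization of the ordered simplex of interior points, produces the exponential directly. It works entirely with formal power series in $\lambda$, so you need no convergence estimate, no ODE, and no polynomial-extension step in $t$. It also makes the final sentence of the lemma transparent: only the chain endpoints $w_0=u$ and $w_m=v$ are evaluated pointwise, and every interior point is integrated.

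\textbf{What the paper's route buys, and a point to state explicitly.} The recurrence pins down the pointwise convention at partition points (``this recurrence fixes the kernel at every $u<v$''). This matters because an operator determines its kernel only almost everywhere. Splitting off the first letter of each word shows that your word kernel satisfies exactly this recurrence, with $K_D=t$. Hence your convention and the paper's agree. You should say this explicitly as part of the proof, not leave it only as a cross-check.
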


\begin{proof}
The identity has zero integral kernel. Composition with $D$ gives,
for every integer $j\geq0$ and $u<v$,
\[
 K_{D^{j+1}}(u,v)=\varphi(u)K_{D^j}(u,v)
       +t\varphi(v)^j+t\int_u^v K_{D^j}(w,v)\,dw.
\]
The middle term integrates the evaluation term of $D^j$; the last
comes from reversing two integrations of a bounded piecewise continuous
integrand on a finite triangle. This recurrence fixes the kernel at every $u<v$,
including assigned endpoint values.

Fix a nonzero real $t$. Induction in the recurrence gives
$|K_{D^j}(u,v)|\leq j|t|(1+|t|B)^{j-1}$ for $j\geq1$, uniformly
on $0\leq u<v\leq B$. Hence, for real
$0<|\lambda|<(1+|t|B)^{-1}$,
$\sum_{j=1}^{\infty}\lambda^jK_{D^j}(u,v)$ converges uniformly.
Multiplying the recurrence by $\lambda^{j+1}$ and summing over
$j\geq0$ shows that this series, divided by $\lambda t$, satisfies
\[
 R_\lambda(u,v)=
 \frac{1}{(1-\lambda\varphi(u))(1-\lambda\varphi(v))}
 +\frac{\lambda t}{1-\lambda\varphi(u)}
                  \int_u^v R_\lambda(w,v)\,dw.
\]
For fixed $v$, the continuous function
$Y(u)=\int_u^v R_\lambda(w,v)\,dw$ satisfies, on each open partition
interval,
\[
 Y'(u)+\frac{\lambda t}{1-\lambda\varphi(u)}Y(u)
   =-\frac{1}{(1-\lambda\varphi(u))(1-\lambda\varphi(v))},
 \qquad Y(v)=0.
\]
The integrating factor
$\exp(\int_0^u\lambda t/(1-\lambda\varphi(w))\,dw)$ gives
\[
 \frac{1}{1-\lambda\varphi(v)}+\lambda tY(u)
 =\frac{\exp(\lambda t\int_u^v
                     dw/(1-\lambda\varphi(w)))}
        {1-\lambda\varphi(v)}.
\]
Continuity of $Y$ joins the solutions at partition points.
Substitution into the integral equation gives the stated $R_\lambda$,
with the assigned endpoint values. Comparing coefficients of
$\lambda^{s+1}$ proves the identity for nonzero real $t$.
Each coefficient identity is polynomial in $t$, so it holds for
formal $t$, including $t=0$.
\end{proof}

Boundary corrections require the integral and endpoint data in \cref{lem:resolvent} to vary independently. The following functional
retains them and specializes to $\Gamma_v$.

Let $w:[0,1]\longrightarrow[0,1]$ be constant on each open interval
of a finite partition, and let $\alpha,\beta\in[0,1]$. For
$G(t,X)=\sum_{s=0}^{d}h_s(t)X^s$, define
\[
 \Gamma_{w,\alpha,\beta}(G)
 =\sum_{s=0}^{d}h_s(t)[\lambda^s]
   \frac{\exp\!\left(t\lambda\int_0^1
                         \frac{w(u)}{1-\lambda u}\,du\right)}
        {(1-\lambda\alpha)(1-\lambda\beta)}.
\]
The values of $w$ at its finitely many partition points do not affect
this integral. If an interval $[a,b]\subseteq[0,B]$ with $a<b$
and a profile $\varphi$ satisfying the preceding finite partition
assumptions obey
$\int_a^b f(\varphi(u))\,du=\int_0^1 f(u)w(u)\,du$
for every continuous $f$, with $\varphi(a)=\alpha$ and
$\varphi(b)=\beta$, \cref{lem:resolvent} gives
\[
 \Gamma_{w,\alpha,\beta}(X^s)=\frac{K_{D^{s+1}}(a,b)}t
 \quad(s\geq0),\qquad
 \Gamma_{w,\alpha,\beta}(G)=\frac{K_{G(t,D)D}(a,b)}t.
\]
The extra $D$ occurs because $X^s$ represents the kernel of $D^{s+1}$.
The interval has length $\int_0^1w(u)\,du$, with no normalization.
Each coefficient depends on finitely many moments
$\int_0^1u^jw(u)\,du$ and polynomially on $\alpha,\beta$;
in particular, $\Gamma_{0,\alpha,0}(G)=G(t,\alpha)$.
The root-summation functional is the specialization
\begin{equation}\label{eq:gamma-specialization}
 \Gamma_v(G)=\Gamma_{\mathbf1_{[1-v,1]},\,1,\,1-v}(G).
\end{equation}
Indeed, coefficientwise integration of the geometric series gives
\[
 \lambda\int_{1-v}^{1}\frac{du}{1-\lambda u}
       =\log\frac{1-\lambda(1-v)}{1-\lambda}.
\]
Exponentiation and the two endpoint factors give the defining
expression for $\Gamma_v$.

If $\deg_t h_s\leq d-s$, then
$\deg_t\Gamma_{w,\alpha,\beta}(G)\leq d$, since the coefficient
$[\lambda^s]$ in its defining expression has degree at most $s$ in $t$.
For the uniform example, where $G_e(t,X)=t+2-X$, the coefficient of
$\lambda$ in the defining expression is
$t\int_0^1w(u)\,du+\alpha+\beta$. Consequently
$\Gamma_{w,\alpha,\beta}(G_e)
=(1-\int_0^1w(u)\,du)t+2-\alpha-\beta$.
Both coefficients are nonnegative because $0\leq w\leq1$ and
$\alpha,\beta\in[0,1]$, despite the negative coefficient of $X$ in $G_e$.

\subsection{The positive coordinate operator}

A unit shift represents the inclusion-exclusion index in $G_{p,k}$.
A period-uniform profile makes this shift commute with the integral
operator and fixes the resolvent contribution of each full period.

\begin{definition}
A function $m:\R\longrightarrow[0,1]$ is called
\emph{period-uniform} if it has period one, is affine on the open
intervals of a finite partition of each period, and satisfies
$\int_0^1 f(m(u))\,du=\int_0^1 f(v)\,dv$
for every continuous function $f$ on $[0,1]$. Values at the partition
points may be assigned independently. On locally bounded functions
which are piecewise continuous with
finitely many pieces on each compact interval and vanish sufficiently
far to the right, define
\[
 (\EE g)(u)=m(u)g(u)+t\int_u^\infty g(v)\,dv,
 \qquad (Sg)(u)=g(u+1).
\]
The integral is finite because $g$ vanishes to the right.
The operator for one bounded coordinate is
$\Aop=\EE+(\Id-\EE)S$.
\end{definition}

Periodicity gives $S\EE=\EE S$. The subtraction in
$\Aop=\EE-(\EE-\Id)S$ removes the shifted contribution of a
coordinate exceeding its upper bound. Cancellation of the integrals
over $[u+1,\infty)$ gives
\[
 (\Aop g)(u)=m(u)g(u)+(1-m(u))g(u+1)
                              +t\int_0^1g(u+x)\,dx.
\]
The three terms describe a zero jump with weight $m(u)$, a unit jump
with weight $1-m(u)$, and an integrated jump in $(0,1)$ with factor
$t$. Zero and unit jumps are called \emph{frozen jumps}. The open
interval specifies the kernel convention and gives the same operator
integral as $[0,1]$. The identity with the uniform rooted count is
proved in \cref{lem:uniform-circle}.

For an integer $q\geq1$, expand $\Aop^q$ by choosing one of these
three terms at each position. In terms containing an integrated jump,
replace the last such jump by the final point to define
$K_{\Aop^q}(u,v)$ at noninteger displacements. Retain all assigned
frozen weights and record integer evaluation terms separately.

\begin{lemma}\label{lem:coordinate-kernels}
For every integer $q\geq1$ and $u<v$ with $v-u\notin\Z$, we have 
\[
 \frac{K_{\Aop^q}(u,v)}t
 =\sum_{h=1}^{q}c_{q,h}(u,v)t^{h-1},
 \qquad c_{q,h}(u,v)\geq0\quad(1\leq h\leq q).
\]
\end{lemma}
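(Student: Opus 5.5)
Expanding $\Aop^q$ term by term is the natural route. By the displayed formula,
\[
 (\Aop g)(u)=m(u)g(u)+(1-m(u))g(u+1)+t\int_0^1g(u+x)\,dx ,
\]
so $\Aop^q g$ is a sum over words $\omega\in\{0,1,\ast\}^q$. A letter $0$ is a zero jump with weight $m(\cdot)$ at the current point, a letter $1$ is a unit jump with weight $1-m(\cdot)$, and a letter $\ast$ is an integrated jump $x_i\in(0,1)$ carrying a factor $t$. For a fixed word, the current position after $i$ steps is $u$ plus the sum of the jumps made so far. Each weight is evaluated at the position where that step starts.

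A word with no $\ast$ produces only an integer displacement, which is recorded as an evaluation term. So when $v-u\notin\Z$, only words with $h\geq1$ letters $\ast$ contribute to the kernel. For such a word, the last integrated jump $x_{i_h}$ is replaced by the final point, as prescribed before the lemma. The final point is $v=u+(\text{number of }1\text{'s})+\sum_{k} x_{i_k}$. Substituting $x_{i_h}=v-u-(\#1)-\sum_{k<h}x_{i_k}$ leaves an integral over $(x_{i_1},\dots,x_{i_{h-1}})\in(0,1)^{h-1}$ of the indicator that this substituted value lies in $(0,1)$. The integrand is the product of the frozen weights $m(\cdot)$ or $1-m(\cdot)$, each evaluated at its intermediate point. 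The prefactor is $t^h$, one factor of $t$ for each integrated jump.

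Dividing by $t$ gives the contribution of this word to $t^{h-1}$, namely
\[
 \int_{(0,1)^{h-1}}\mathbf 1\Bigl[\,0<v-u-\#1-\textstyle\sum_{k<h}x_{i_k}<1\Bigr]\prod_{\text{frozen steps }i}w_i\bigl(u+\text{position before step }i\bigr)\,dx .
\]
Here $w_i\in\{m,1-m\}$. Since $m$ takes values in $[0,1]$, both $m$ and $1-m$ are nonnegative, so the integrand is nonnegative. The assigned values at partition points do not affect this, because they also lie in $[0,1]$.

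Define $c_{q,h}(u,v)$ as the sum of these integrals over all words with exactly $h$ letters $\ast$. Each summand is a finite Riemann integral of a bounded, piecewise continuous, nonnegative function over a box, so $c_{q,h}(u,v)\ge0$. The index $h$ ranges over $1,\dots,q$ because there are $q$ letters, and the degree bookkeeping $t^{h-1}$ holds by construction. This gives the claimed expansion
\[
 \frac{K_{\Aop^q}(u,v)}{t}=\sum_{h=1}^{q}c_{q,h}(u,v)\,t^{h-1}.
\]

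The main obstacle is to check that this word expansion agrees with the kernel convention stated before the lemma. There are two points to verify. The first is that the subtraction $(\Id-\EE)S$ has already cancelled all signed terms, so every surviving term is a composition of the three nonnegative pieces above. This follows from the cancellation identity, which is applied once per factor, since each factor of $\Aop^q$ is literally the three-term operator. The second is that mixed terms, in which evaluation terms follow integrated jumps, are assigned to the kernel rather than to the integer evaluation terms. This holds because a noninteger displacement forces at least one $\ast$. After that, nonnegativity is immediate.
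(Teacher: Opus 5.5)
Your proposal is correct and follows the paper's proof essentially step for step: you expand $\Aop^q$ into three-term words, discard words without integrated jumps at noninteger displacement, and eliminate the last integrated jump (with derivative one). This leaves $t^h$ times an integral of a product of frozen weights in $[0,1]$, and summing over words with $h$ integrated jumps defines $c_{q,h}(u,v)\geq0$. The paper adds one detail you only name as ``bounded, piecewise continuous'': each profile argument is affine in the remaining variables, so the integrand is piecewise polynomial with discontinuities on finitely many hyperplanes, and constant arguments keep their assigned values, which gives Riemann integrability.
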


\begin{proof}
Terms without integrated jumps evaluate at integer translates of $u$,
so do not contribute at a noninteger displacement. Fix a term with
$h$ integrated jumps and $j$ unit jumps, where $1\leq h\leq q$.
It is $t^h$ times an integral over $(0,1)^h$ with integrand
$\omega(x_1,\ldots,x_h)g(u+j+\sum_{i=1}^{h}x_i)$, where
$\omega\in[0,1]$ is the product of frozen weights.

For $h\geq2$, the final-point equation gives
$x_h=v-u-j-\sum_{i=1}^{h-1}x_i$. The remaining variables range over
the polytope $\Delta_{h,j}(u,v)$ defined by $0<x_i<1$ for
$1\leq i\leq h-1$ and
$0<v-u-j-\sum_{i=1}^{h-1}x_i<1$.
The change from $x_h$ to $v$ has derivative one, giving the kernel
contribution
\[
 t^h\int_{\Delta_{h,j}(u,v)}
 \omega\!\left(x_1,\ldots,x_{h-1},
                  v-u-j-\sum_{i=1}^{h-1}x_i\right)
 \,dx_1\cdots dx_{h-1}.
\]
For $h=1$, it is $t\omega(v-u-j)$ when $0<v-u-j<1$, and zero
otherwise. Assigned weights at the initial, final, and frozen
intermediate points are retained.

Each intermediate point is affine in the remaining variables.
Constant points retain their assigned $m$-values; at nonconstant
points, the affine expression for $m$ changes only on finitely
many proper affine hyperplanes. The integrands are bounded and
piecewise polynomial on polytopes, hence Riemann integrable, also
as successive integrals.

Each contribution is $t^h$ times a nonnegative quantity. Summing
over terms with $h$ integrated jumps defines $c_{q,h}(u,v)$;
division by $t$ proves the formula.
\end{proof}

Restrictions on sums of consecutive jumps add linear inequalities
to these integration domains. The restricted kernels still have
nonnegative coefficients; the allowed evaluation terms are retained
separately.

\begin{example}
For a noninteger displacement with $0<v-u<2$,
\[
 \frac{K_{\Aop^2}(u,v)}t
 =\begin{cases}
 m(u)+m(v)+t(v-u),&0<v-u<1,\\
 2-m(u)-m(v)+t(2-v+u),&1<v-u<2.
 \end{cases}
\]
One integrated jump is accompanied by a zero jump in the first range,
giving $m(u)+m(v)$, or a unit jump in the second, giving
$(1-m(u))+(1-m(v-1))=2-m(u)-m(v)$ by periodicity.
Two integrations contribute $t^2$ times the length of
$(0,1)\cap(v-u-1,v-u)$, namely $v-u$ or $2-v+u$.
\end{example}

For any finite product below, record terms without integration as
evaluations at nonnegative integer translates, and define kernels by
eliminating the last integrated jump. For polynomials in $\EE,S$, \cref{lem:resolvent} supplies the kernels of powers of $\EE$,
while $S^j$ shifts the starting point by $j$. The kernel recurrence involves
only the interval between the initial and final points, so truncating
the upper integration limit at any $B>v$ leaves the kernel at $(u,v)$ unchanged.
Evaluation coefficients retain all assigned values of $m$.

\begin{lemma}
\label{lem:kernel-compatibility}
Consider finite evaluation-and-integral expansions formed from
$\EE,S,\Aop$, including restrictions of coordinate blocks by
linear inequalities on their jumps. Allow also evaluation terms
$a(t,u)g(u+j)$, where $j$ is a nonnegative integer and $a$ is a
polynomial in $t$ whose coefficients are piecewise polynomial in
$u$, with finitely many pieces on each compact interval and
specified values at partition points.

Suppose that two representations of each factor have the same
coefficient of every integer-translate evaluation at every
starting point, and have the same integral kernel at every
noninteger displacement. Their finite compositions then have the
same evaluation coefficients and the same integral kernels at
every noninteger total displacement.
\end{lemma}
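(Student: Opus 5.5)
The plan is to reduce the statement to the case of a product of two factors and then induct on the number of factors. An operator in the class has a representation of the form
\[
 (Pg)(u)=\sum_{j\geq0}a_j(t,u)g(u+j)+\int_u^{u+B}K_P(u,v)g(v)\,dv .
\]
The sum over $j$ is finite. The kernel $K_P(u,\cdot)$ is bounded and piecewise continuous, and it is specified off the integer translates $v-u\in\Z$. This class contains $\EE$, $S$ and $\Aop$, as well as restricted blocks and the allowed evaluation terms. Because $K_P(u,v)$ is only needed at noninteger displacements, two representations with the same data agree as operators on the admissible $g$.

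The reason is that the set $\{v:v-u\in\Z\}$ has measure zero inside the finite integration range. Changing a kernel on such a set therefore leaves the Riemann integral unchanged, since the kernels are bounded and piecewise continuous with finitely many pieces. So the hypothesis really says that the two representations are equal as operators, while also agreeing on the recorded data.

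For a composition $PQ$, I would expand $(P(Qg))(u)$ into four types of terms and read off the data of each.

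\begin{enumerate}
\item \emph{Evaluation composed with evaluation.} This gives $a_j^P(t,u)a_k^Q(t,u+j)g(u+j+k)$, which is again an evaluation term at an integer translate.
\item \emph{Evaluation composed with integral.} This gives $a_j^P(t,u)\int K_Q(u+j,v)g(v)\,dv$, with kernel $a_j^P(t,u)K_Q(u+j,v)$. Its displacement $v-u-j$ is noninteger exactly when $v-u$ is noninteger.
\item \emph{Integral composed with evaluation.} This gives $\int K_P(u,w)a_k^Q(t,w)g(w+k)\,dw$. Substituting $v=w+k$ produces the kernel $K_P(u,v-k)a_k^Q(t,v-k)$, which uses $K_P$ only at the noninteger displacement $v-k-u$.
\item \emph{Integral composed with integral.} This gives $\int\!\int K_P(u,w)K_Q(w,v)g(v)\,dv\,dw$. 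Reversing the order of integration (Fubini for Riemann integrals of bounded piecewise continuous functions on a finite triangle, as in \cref{lem:resolvent}) yields the kernel $\int_u^vK_P(u,w)K_Q(w,v)\,dw$.
\end{enumerate}

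The composed data are thus given by explicit formulas in the data of the factors. Each formula evaluates the factor kernels only at noninteger displacements, apart from a measure-zero set of $w$ in the integral of the fourth type. The factor evaluation coefficients $a_j$ enter only at starting points $u$ or $u+j$, and their values are matched at every starting point by hypothesis. Hence equal factor data give equal composed data at every noninteger total displacement, and induction extends this to finite compositions.

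The main obstacle will be the fourth type of term, and specifically the measure-zero exceptional set. Inside $\int_u^vK_P(u,w)K_Q(w,v)\,dw$, the factors sit at integer displacements exactly when $w-u\in\Z$ or $v-w\in\Z$. These are finitely many points of $[u,v]$, and their number is bounded by $B$. I need the two representations to agree off these points and the integrand to stay Riemann integrable.

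Integrability follows from boundedness together with finitely many pieces. These hold for the kernels of \cref{lem:resolvent} and \cref{lem:coordinate-kernels}, and they are preserved by the composition formulas. Agreement off finitely many points then suffices.

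For restricted coordinate blocks, I would treat a block as one factor whose data are defined by the restricted integration domains. The same expansion then applies, because the linear inequalities act only inside that block's data. The convention of eliminating the last integrated jump is handled by the substitution in the third type of term, which has Jacobian one.
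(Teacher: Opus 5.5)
Your proposal is correct and follows essentially the same route as the paper. You reduce to two factors and write the composed evaluation coefficients and the three-part composed kernel: evaluation then integral, integral then evaluation with the Jacobian-one shift $v=w+k$, and the convolution $\int_u^v K_P(u,w)K_Q(w,v)\,dw$. You then use that integer shifts preserve noninteger displacement and that the convolution integrands differ only at finitely many $w$, and induct.

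One small imprecision: you write every kernel with a fixed upper limit $u+B$, which does not hold for $\EE$, whose kernel is $t$ on all of $v>u$. As in the paper, you should instead truncate using the fact that $g$ vanishes to the right, which makes every integration domain a finite triangle or polytope.
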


\begin{proof}
All calculations are coefficientwise in $t$ and involve finitely many
powers. Fix $u$ and $g$ in the operator domain, and choose $B>u$ with $g=0$ on
$[B,\infty)$. All jumps are nonnegative, so a contributing term
has total jump at most $B-u$. This bounds every integrated jump
and truncates all integrals in a finite composition, including
those defining $\EE$. Intermediate outputs also vanish sufficiently
far to the right.

The jump variables now range over finite polytopes. Arguments of
profiles and evaluation coefficients are affine: nonconstant arguments
meet partition points on finitely many proper affine hyperplanes,
and constant arguments retain their specified values. Coefficient
integrands are therefore piecewise polynomial on finite polyhedral
partitions. Multiplying by $g$ at the final point gives bounded
piecewise continuous integrands. This remains true after fixing any
subset of variables, with assigned values retained at constant arguments.
Thus the multiple integrals and all their sections are Riemann
integrable, allowing successive integrations to be reversed. At fixed
initial and final points, the same reasoning applies after eliminating
the last integrated jump.

It suffices to consider two factors, written as
\[
 (Tg)(u)=\sum_{i=0}^{I}a_i(t,u)g(u+i)
                  +\int_u^\infty K_T(u,z)g(z)\,dz
\]
and
\[
 (Ug)(u)=\sum_{j=0}^{J}b_j(t,u)g(u+j)
                  +\int_u^\infty K_U(u,z)g(z)\,dz.
\]
Kernels are set to zero outside their forward support.
The evaluation part of $TU$ is
\[
 \sum_{i=0}^{I}\sum_{j=0}^{J}
       a_i(t,u)b_j(t,u+i)g(u+i+j).
\]
An evaluation before an integral shifts its starting point to $u+i$;
an evaluation after an integral uses $v=z+j$, with derivative one.
Two integrals range over $u\leq z\leq v$. Thus a representative of
the composed kernel is
\begin{align*}
K_{TU}(u,v)=\sum_{i=0}^{I}a_i(t,u)K_U(u+i,v)+\sum_{j=0}^{J}K_T(u,v-j)b_j(t,v-j)+\int_u^vK_T(u,z)K_U(z,v)\,dz.
\end{align*}
This formula also represents the grouped jump expansion at noninteger
total displacement. If both groups contain integrated jumps, replace
their last integrated jumps by $z$ and $v$, respectively. Both changes
have derivative one; reversing the remaining finite Riemann integrals
gives the last term. Changes of kernel convention at integer block
displacements affect only finitely many intermediate $z$ and leave
this integral unchanged. If just one group contains an integrated jump,
the first or second sum results. Its displacement is noninteger because
the other group's displacement is integral. Terms without integration
give the displayed evaluation part.

Fix $u<v$ with $v-u\notin\Z$. Each supported kernel in the two
sums has noninteger displacement, since $i,j$ are integers. Thus
the sums agree for the two representations, with evaluation
coefficients taken at their specified points. In the integral, the
integrands agree except possibly where $z-u$ or $v-z$ is an integer.
There are finitely many such points in $[u,v]$, so the integrals agree.
Evaluation coefficients agree pointwise as well. Induction proves the
claim for finite compositions.
\end{proof}

For $\Aop=\EE+(\Id-\EE)S$, tail cancellation gives evaluation
weights $m(u),1-m(u)$ at jumps $0,1$, and kernel $t$ on
$0<v-u<1$, zero at other noninteger displacements. Hence its polynomial
and jump representations satisfy \cref{lem:kernel-compatibility}.
Periodicity, including assigned values at integer translates, gives
$S\EE=\EE S$ in both. The lemma therefore identifies their finite
compositions at noninteger total displacement and permits insertion of
a restricted block with its explicitly computed evaluation correction
between external coordinate operators. Integer-displacement kernel
values remain distinct from evaluation coefficients.

\subsection{The complementary profile}

The profile used in the transfer identity is chosen to match the
coefficient formula for $\Gamma_{w,\alpha,\beta}(f_b)$.
For a general $T$, linearity and coefficient convolution give
\[
 \begin{aligned}
 \Gamma_{w,\alpha,\beta}(f_b(T,X))
 &=\sum_{s=0}^{b-1}(-1)^s\binom{T+b}{b-1-s}
                         \Gamma_{w,\alpha,\beta}(X^s)\\
 &=[v^{b-1}](1+v)^{T+b}
       \sum_{s=0}^{\infty}(-v)^s\Gamma_{w,\alpha,\beta}(X^s)\\
 &=[v^{b-1}]
   \frac{(1+v)^{T+b}}{(1+v\alpha)(1+v\beta)}
   \exp\!\left(-tv\int_0^1\frac{w(u)}{1+vu}\,du\right).
 \end{aligned}
\]
Only the terms with $0\leq s\leq b-1$ contribute to the extracted
coefficient. The last equality evaluates the formal series defining
$\Gamma$ at $\lambda=-v$.
The substitution $v=\lambda/(1-\lambda)$ changes this to
\[
 [\lambda^{b-1}]
 \frac{(1-\lambda)^{-T}
       \exp\!\left(-t\lambda\int_0^1
                 \frac{w(u)}{1-\lambda(1-u)}\,du\right)}
      {(1-\lambda(1-\alpha))(1-\lambda(1-\beta))}.
\]

Set $L=\int_0^1w(u)\,du\in(0,1)$ and
$T=(r-j)t-\delta$, where $r\geq1$, $0\leq j\leq r-1$, and
$0\leq\delta\leq b-1$ are integers. The factor
$(1-\lambda)^{-(r-j)t}$ represents $r-j$ uniform periods, and the
negative exponential removes the weight $w(1-u)$. We therefore seek
a period-uniform profile with $m(j)=1-\alpha$, $m(r-L)=1-\beta$, and
\[
 \int_j^{r-L}f(m(u))\,du
  =(r-j)\int_0^1f(v)\,dv-\int_0^1f(1-u)w(u)\,du
\]
for every continuous $f$ on $[0,1]$. Writing $w^\vee(u)=w(1-u)$,
construct the initial arc of length $1-L$ with weight $1-w^\vee$
and the remaining arc with weight $w^\vee$:
\[
 \int_0^{1-L}f(m(u))\,du
     =\int_0^1 f(v)(1-w^\vee(v))\,dv,
 \qquad
 \int_{1-L}^{1}f(m(u))\,du
     =\int_0^1 f(v)w^\vee(v)\,dv.
\]
On each partition interval $[a,b]$ where $w^\vee=d$, allot
subintervals of lengths $(1-d)(b-a)$ and $d(b-a)$ in the initial
and remaining arcs, respectively, and let the profile traverse $[a,b]$
linearly on each. Omit zero-length subintervals and concatenate in partition
order. Change of variables gives $(1-d)\int_a^b f(v)\,dv$ and
$d\int_a^b f(v)\,dv$. Summing proves the two identities and
period-uniformity; extend with period one.
Assign $m(0)=1-\alpha$ and $m(1-L)=1-\beta$ at all integer
translates. These finite changes per period leave the integrals unchanged.

For example, if $w(u)=c$ with $0<c<1$, then $L=c$ and the
profile on the open parts of one period is
\[
 m(u)=
 \begin{cases}
 u/(1-c),&0<u<1-c,\\
 (u-1+c)/c,&1-c<u<1.
 \end{cases}
\]
The assignments $m(0)=1-\alpha$ and $m(1-c)=1-\beta$ are
retained, and the profile is extended with period one.
Ordinary change of variables gives
$\int_0^{1-c}f(m(u))\,du=(1-c)\int_0^1f(v)\,dv$
and $\int_{1-c}^{1}f(m(u))\,du=c\int_0^1f(v)\,dv$.

For every integer
$0\leq j\leq r-1$, the interval $[j,r-L]$ consists of $r-j-1$
full periods and the initial arc. Its integral therefore gives
\[
 R_\lambda(j,r-L)=
 \frac{(1-\lambda)^{-(r-j)t}
       \exp\!\left(-t\lambda\int_0^1
                  \frac{w(u)}{1-\lambda(1-u)}\,du\right)}
      {(1-\lambda(1-\alpha))(1-\lambda(1-\beta))}.
\]
This follows from
$\lambda\int_0^1du/(1-\lambda u)=-\log(1-\lambda)$ and the
substitution $v=1-u$ in the reflected term.

\begin{lemma}\label{lem:transfer}
Let $r\geq1$, $b\geq1$, $0\leq j\leq r-1$, and
$0\leq\delta\leq b-1$ be integers. For the complementary profile just constructed,
\[
 \Gamma_{w,\alpha,\beta}
       \bigl(f_b((r-j)t-\delta,X)\bigr)
 =\frac{K_{S^j\EE^{b-\delta}(\EE-\Id)^\delta}(0,r-L)}t.
\]
Moreover, for $t\in\Z_{\geq1}$ and $a\in\{0,\ldots,t\}$, we have 
\[
 [z^{rt-a}]\frac{z^{jt+\delta}}{(1-z)^b}
       =\binom{(r-j)t-a-\delta+b-1}{b-1}.
\]
The right side is interpreted as a binomial polynomial, including
when the remaining exponent is negative.
\end{lemma}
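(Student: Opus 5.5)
The plan is to compute both sides of the first identity as the same coefficient extraction in $\lambda$, and to treat the second identity as a direct coefficient count. For the left side, put $T=(r-j)t-\delta$ into the formula derived just before the lemma (after the substitution $v=\lambda/(1-\lambda)$). It reads
\[
 \Gamma_{w,\alpha,\beta}\bigl(f_b(T,X)\bigr)
 =[\lambda^{b-1}]\frac{(1-\lambda)^{-(r-j)t+\delta}
   \exp\!\left(-t\lambda\int_0^1\frac{w(u)}{1-\lambda(1-u)}\,du\right)}
   {(1-\lambda(1-\alpha))(1-\lambda(1-\beta))}.
\]
Comparing with the displayed expression for $R_\lambda(j,r-L)$ for the complementary profile, the left side equals $[\lambda^{b-1}](1-\lambda)^{\delta}R_\lambda(j,r-L)$.

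For the right side, first remove the shift. Since $S\EE=\EE S$ and $S^j$ moves the starting point by $j$, the kernel of $S^j\EE^{b-\delta}(\EE-\Id)^\delta$ at $(0,r-L)$ is the kernel of $\EE^{b-\delta}(\EE-\Id)^\delta$ at $(j,r-L)$. The displacement $r-L-j$ is noninteger because $L\in(0,1)$, so evaluation terms do not interfere. By \cref{lem:kernel-compatibility}, we may expand
\[
 \EE^{b-\delta}(\EE-\Id)^\delta=\sum_{i=0}^{\delta}\binom{\delta}{i}(-1)^{\delta-i}\EE^{b-\delta+i}
\]
and compute kernels term by term. Every exponent $b-\delta+i$ is at least $1$, since $\delta\leq b-1$. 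Truncating the upper limit at some $B>r-L$ does not change the kernel at $(j,r-L)$, as noted before \cref{lem:kernel-compatibility}. Then \cref{lem:resolvent}, applied with $\varphi=m$ on $[0,B]$ and the assigned values $m(j)=1-\alpha$ and $m(r-L)=1-\beta$ in the denominators, gives
\[
 K_{\EE^{s+1}}(j,r-L)/t=[\lambda^s]R_\lambda(j,r-L).
\]
Hence the right side divided by $t$ is
\[
 \sum_{i}\binom{\delta}{i}(-1)^{\delta-i}[\lambda^{b-1}]\lambda^{\delta-i}R_\lambda
 =[\lambda^{b-1}](1-\lambda)^{\delta}R_\lambda(j,r-L),
\]
which matches the left side.

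The main point of care is making sure the closed form for $R_\lambda(j,r-L)$ really uses the assigned endpoint values. These are $m(j)=m(0)=1-\alpha$ by periodicity and $m(r-L)=m(1-L)=1-\beta$, which holds because $r-L$ is an integer translate of $1-L$. The moment integral over $[j,r-L]$ must also split as $r-j-1$ full periods plus the initial arc; this was established in the construction, so I would simply cite it.

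For the second identity, set $N=(r-j)t-a-\delta$, so that $[z^{rt-a}]z^{jt+\delta}(1-z)^{-b}=[z^N](1-z)^{-b}$. If $N\geq0$, this coefficient is $\binom{N+b-1}{b-1}$. If $N<0$, the coefficient is $0$, and I would check that the binomial polynomial also vanishes. Since $r-j\geq1$ and $a\leq t$, we have $N\geq(r-j-1)t-\delta\geq-\delta\geq-(b-1)$. Thus $0\leq N+b-1<b-1$, so $\binom{N+b-1}{b-1}$ has a zero factor in its numerator, exactly as in the uniform inclusion–exclusion argument earlier.
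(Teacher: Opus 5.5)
Your proposal is correct and follows the paper's proof essentially step for step. You rewrite the left side as $[\lambda^{b-1}](1-\lambda)^\delta R_\lambda(j,r-L)$, expand $(\EE-\Id)^\delta$ binomially, apply \cref{lem:resolvent} to each power $\EE^{b-\delta+i}$ (using $b-\delta+i\geq1$), let $S^j$ move the starting point to $j$, and treat the negative-exponent case of the counting identity via $0\leq N+b-1<b-1$. Your added remarks on the assigned endpoint values $m(j)=1-\alpha$ and $m(r-L)=1-\beta$, and on truncating at $B>r-L$, only make explicit what the paper leaves implicit.
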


\begin{proof}
For $T=(r-j)t-\delta$, the coefficient formula above is
$[\lambda^{b-1}](1-\lambda)^\delta R_\lambda(j,r-L)$.
Since $\delta\leq b-1$, every exponent $b-\delta+\ell$ below
is at least one. The binomial expansion and \cref{lem:resolvent} therefore give
\[
 \begin{aligned}
 \frac{K_{\EE^{b-\delta}(\EE-\Id)^\delta}(j,r-L)}t
 &=\sum_{\ell=0}^{\delta}(-1)^{\delta-\ell}\binom{\delta}{\ell}
       \frac{K_{\EE^{b-\delta+\ell}}(j,r-L)}t\\
 &=\sum_{\ell=0}^{\delta}(-1)^{\delta-\ell}\binom{\delta}{\ell}
       [\lambda^{b-\delta+\ell-1}]R_\lambda(j,r-L)\\
 &=[\lambda^{b-1}](1-\lambda)^\delta R_\lambda(j,r-L).
 \end{aligned}
\]
The shift $S^j$ changes the starting point from $0$ to $j$, which
proves the kernel identity.

The remaining counting exponent is $N=(r-j)t-a-\delta$.
For $N\geq0$, extracting the coefficient of $(1-z)^{-b}$ gives
the stated binomial coefficient. For $N<0$,
$N\geq(r-j-1)t-\delta\geq-\delta\geq-(b-1)$, so
$0\leq N+b-1<b-1$. Both the binomial polynomial and the
extracted coefficient are then zero, including at small positive $t$.
\end{proof}

The transfer lemma applies to the specified coefficient and its
corresponding kernel. It does not define a substitution homomorphism
on arbitrary rational generating functions.

\begin{lemma}\label{lem:uniform-circle}
Let $p\geq1$ and $1\leq k\leq p$ be integers. For the complementary profile associated with $w,\alpha,\beta$ and $0<L<1$, we have 
\[
 \Gamma_{w,\alpha,\beta}(G_{p,k})
                       =\frac{K_{\Aop^p}(0,k-L)}t.
\]
In particular, this polynomial has nonnegative coefficients in $t$.
\end{lemma}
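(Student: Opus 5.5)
We expand both sides by linearity and match them term by term, using \cref{lem:transfer} with $b=p$ and $r=k$.

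\textbf{Left side.} By definition,
\[
 G_{p,k}(t,X)=\sum_{j=0}^{k-1}(-1)^j\binom pj f_p((k-j)t-j,X).
\]
Applying \cref{lem:transfer} with $b=p$, $r=k$ and $\delta=j$ needs $0\leq j\leq p-1$. This holds because $j\leq k-1\leq p-1$. We get
\[
 \Gamma_{w,\alpha,\beta}(G_{p,k})
 =\sum_{j=0}^{k-1}(-1)^j\binom pj
   \frac{K_{S^j\EE^{p-j}(\EE-\Id)^j}(0,k-L)}t
 =\sum_{j=0}^{k-1}\binom pj
   \frac{K_{S^j\EE^{p-j}(\Id-\EE)^j}(0,k-L)}t .
\]

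\textbf{Right side.} Since $S\EE=\EE S$, and hence $S$ commutes with $\Id-\EE$, the binomial theorem gives
\[
 \Aop^p=\bigl(\EE+(\Id-\EE)S\bigr)^p=\sum_{j=0}^{p}\binom pj S^j\EE^{p-j}(\Id-\EE)^j .
\]
This identity holds as polynomial operators. By the remark after \cref{lem:kernel-compatibility}, the kernels of the jump representation and of the polynomial representation agree at noninteger total displacement. The displacement $k-L$ is noninteger because $0<L<1$. So $K_{\Aop^p}(0,k-L)$ is the sum over $0\leq j\leq p$ of the corresponding kernels.

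\textbf{Vanishing of the terms $j\geq k$.} A term with $S^j$ has its kernel supported at displacements at least $j$. The operators $\EE$ and $\Id-\EE$ only produce forward jumps, and $S^j$ moves the start point to $j$. If $j\geq k>k-L$, the kernel at $(0,k-L)$ is therefore zero. The two expansions then agree term by term, which proves the identity.

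Some bookkeeping must be checked here. The kernel of $S^j\EE^{p-j}(\Id-\EE)^j$ at $(0,k-L)$ should equal the kernel of $\EE^{p-j}(\Id-\EE)^j$ at $(j,k-L)$. This is the statement ``$S^j$ shifts the starting point'' used inside \cref{lem:transfer}. The assigned profile values $m(j)=1-\alpha$ at integer translates and $m(k-L)=1-\beta$ must enter consistently on both sides. They do, since both sides use the same complementary profile, constructed for $r=k$. The profile depends on $r$ only through the endpoint $r-L$ and the periodic assignments, so the values at all $j\leq k-1$ are covered.

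\textbf{Nonnegativity.} By \cref{lem:coordinate-kernels} with $q=p$ and the noninteger displacement $k-L$, we have $K_{\Aop^p}(0,k-L)/t=\sum_h c_{p,h}t^{h-1}$ with every $c_{p,h}\geq0$.

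The main obstacle is justifying that kernel identities for $\Aop^p$ expanded as a polynomial in $\EE,S$ coincide with the jump expansion used in \cref{lem:coordinate-kernels}, including the assigned endpoint values. I would handle this by invoking \cref{lem:kernel-compatibility} directly for the single factor $\Aop$ and then passing to its $p$-fold composition.
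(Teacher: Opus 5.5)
Your proposal is correct and matches the paper's proof: expand $\Aop^p$ binomially using $S\EE=\EE S$, and discard the terms with $j\geq k$ because they cannot reach $k-L$. Then apply \cref{lem:transfer} with $r=k$, $b=p$, $\delta=j$ to the remaining terms, pass to the jump kernel at the noninteger displacement $k-L$ via \cref{lem:kernel-compatibility}, and read off nonnegativity from \cref{lem:coordinate-kernels}. Your explicit check that $j\leq k-1\leq p-1$, which gives the condition $\delta\leq b-1$, is left implicit in the paper.
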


\begin{proof}
Commutativity of $S$ and $\EE$ gives
\[
 \Aop^p=\sum_{j=0}^{p}\binom pj
                        S^j\EE^{p-j}(\Id-\EE)^j.
\]
At total jump $k-L$, the terms with $j\geq k$ cannot reach the
endpoint and contribute zero. For $0\leq j\leq k-1$, the transfer
lemma, with $r=k$, $b=p$, and $\delta=j$, gives the $j$-th term in the
transform of the uniform rooted count. The factor
$(\Id-\EE)^j=(-1)^j(\EE-\Id)^j$ gives the inclusion-exclusion sign. \cref{lem:kernel-compatibility} identifies this polynomial
kernel with the coordinate jump kernel at the noninteger displacement
$k-L$. Coefficient nonnegativity follows from \cref{lem:coordinate-kernels}.
\end{proof}

\section{Hyperplane constraints and coefficient nonnegativity}\label{sec:constraints}

\subsection{An upper interval sum}

For integers $p\geq1$, $1\leq k\leq p$, and $t\geq1$, the generating
polynomial for a block sum at most $kt$ is
\[
 B_{p,k}(t,z)=\sum_{x_1=0}^{t}\cdots\sum_{x_p=0}^{t}
       \mathbf1_{\{x_1+\cdots+x_p\leq kt\}}z^{x_1+\cdots+x_p}.
\]

\begin{lemma}\label{lem:upper-discrete}
The block generating polynomial satisfies
\[
 B_{p,k}(t,z)=\sum_{j=0}^{k-1}(-1)^j\binom pj
                   \frac{z^{j(t+1)}}{(1-z)^p}
       -\frac{z^{kt+1}}{1-z}
                   G_{p,k}\!\left(t,-\frac z{1-z}\right).
\]
\end{lemma}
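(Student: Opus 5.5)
The plan is to read $B_{p,k}(t,z)$ as a truncation of a product of geometric sums, and then to identify the part removed by the truncation with the polynomial extension of the uniform rooted count $F_{p,k}$, evaluated at negative values of the root coordinate.

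First, without the constraint, $\sum_{x_1,\ldots,x_p=0}^{t}z^{x_1+\cdots+x_p}=(1-z^{t+1})^p/(1-z)^p=\sum_{j=0}^{p}(-1)^j\binom pj z^{j(t+1)}/(1-z)^p$. Hence $B_{p,k}$ consists of the monomials of this polynomial with exponent at most $kt$. For $j\geq k$ the term $z^{j(t+1)}/(1-z)^p$ has smallest exponent $j(t+1)>kt$, so it contributes nothing below or at $kt$. Thus $B_{p,k}$ equals the truncation to exponents $\leq kt$ of the formal series $\Sigma(z)=\sum_{j=0}^{k-1}(-1)^j\binom pj z^{j(t+1)}/(1-z)^p$. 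Consequently $B_{p,k}=\Sigma(z)-\mathcal T(z)$, where the tail is $\mathcal T(z)=\sum_{N>kt}([z^N]\Sigma)z^N$. The rest of the proof consists in computing this tail.

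For $N=kt+1+m$ with $m\geq0$, I expand $[z^N]\Sigma=\sum_{j=0}^{k-1}(-1)^j\binom pj\binom{N-j(t+1)+p-1}{p-1}$. Whenever $N-j(t+1)<0$, this exponent lies in $[-(p-1),-1]$ (the same bound as in the verification of $F_{p,k}$ and in \cref{lem:transfer}), so the binomial polynomial vanishes and agrees with the actual coefficient. Substituting $N=kt-a$ shows that this is exactly the inclusion--exclusion polynomial $F_{p,k}(a,t)$ at $a=-(m+1)$. Hence $\mathcal T(z)=\sum_{m\geq0}F_{p,k}(-(m+1),t)\,z^{kt+1+m}$, using the polynomial $F_{p,k}$ rather than the count. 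The main care in the proof goes into this step: one must justify that the polynomial formula is legitimate at these arguments outside the counting triangle.

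Finally I use the binomial expansion $F_{p,k}(a,t)=\sum_s h_s(t)\binom{a+s}{s}$, whose transform is $G_{p,k}(t,X)=\sum_s h_s(t)X^s$. At $a=-(m+1)$ one has $\binom{s-m-1}{s}=(-1)^s\binom{m}{s}$. Together with $\sum_{m\geq0}\binom ms z^m=z^s/(1-z)^{s+1}$, this gives
\[
\mathcal T(z)=z^{kt+1}\sum_s h_s(t)(-1)^s\frac{z^s}{(1-z)^{s+1}}=\frac{z^{kt+1}}{1-z}\,G_{p,k}\!\left(t,-\frac z{1-z}\right).
\]
Substituting this into $B_{p,k}=\Sigma-\mathcal T$ yields the stated identity.
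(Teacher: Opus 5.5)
Your proposal is correct and follows essentially the same route as the paper. Both arguments truncate the inclusion--exclusion series $\sum_{j<k}(-1)^j\binom pj z^{j(t+1)}(1-z)^{-p}$ at degree $kt$ and identify the tail coefficient in degree $kt+1+m$ with the polynomial $F_{p,k}(-m-1,t)$; this identification is valid because the upper binomial argument $(k-j)t+m+p-j$ is always nonnegative, so negative exponents give vanishing binomial polynomials. The tail is then summed via $\binom{s-m-1}{s}=(-1)^s\binom ms$ and $\sum_{m\geq0}\binom ms z^m=z^s/(1-z)^{s+1}$.
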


\begin{proof}
Through degree $kt$, the first sum counts bounded compositions:
terms with at least $k$ coordinates exceeding $t$ begin in degree
$k(t+1)>kt$. For $v\geq0$, its coefficient in degree $kt+1+v$ is
$F_{p,k}(-v-1,t)$. The binomial expansion of $F_{p,k}$ gives
\[
 F_{p,k}(-v-1,t)=\sum_{s=0}^{p-1}h_s(t)(-1)^s\binom vs,
 \qquad G_{p,k}(t,X)=\sum_{s=0}^{p-1}h_s(t)X^s.
\]
Here $\binom{-v-1+s}{s}=(-1)^s\binom vs$, so the tail is
\[
 \sum_{v=0}^{\infty}F_{p,k}(-v-1,t)z^v
 =\sum_{s=0}^{p-1}h_s(t)(-1)^s\frac{z^s}{(1-z)^{s+1}}
 =\frac1{1-z}G_{p,k}\!\left(t,-\frac z{1-z}\right).
\]
The upper binomial arguments in the tail are at least $p-j\geq1$
(the minimum occurs at $v=0$, $a=-1$), including at small dilations.
Subtracting the tail proves the identity.
\end{proof}

The block operator corresponding to \cref{lem:upper-discrete} is
\[
 \BB_{p,k}=
 \sum_{j=0}^{k-1}\binom pj S^j\EE^{p-j}(\Id-\EE)^j
       +S^k(\Id-\EE)G_{p,k}(t,\Id-\EE).
\]
\cref{lem:rooted} verifies the transfer term by term after including
all external coordinate factors. The following correction measures
the change in the evaluation weight at jump $k$.

\begin{lemma}\label{lem:upper-operator}
The integral kernel of $\BB_{p,k}$ at noninteger block jumps and
its integer evaluation terms agree with the expansion of $\Aop^p$
restricted to block jumps at most $k$, with the additional evaluation term
$Q_{p,k}(t,m(u))g(u+k)$, where
\[
 Q_{p,k}(t,x)=(1-x)G_{p,k}(t,1-x)-\binom pk(1-x)^kx^{p-k}.
\]
Every coefficient in $t$ of $Q_{p,k}(t,x)$ is nonnegative for
$0\leq x\leq1$. A restriction retains eligible zero and unit steps
and imposes the block inequality on each integration domain.
\end{lemma}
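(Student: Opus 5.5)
We compare $\BB_{p,k}$ with $\Aop^p$ directly. Since $S\EE=\EE S$, the binomial expansion $\Aop^p=\sum_{j=0}^{p}\binom pj S^j\EE^{p-j}(\Id-\EE)^j$ holds, and the first sum in $\BB_{p,k}$ is exactly the part with $j\leq k-1$. Group the jump expansion of $\Aop^p$ by the number $j$ of unit jumps. At a total block jump strictly below $k$ only terms with $j\leq k-1$ are present, so, using \cref{lem:kernel-compatibility}, on $0<v-u<k$ noninteger and at integer evaluations $u+i$ with $i<k$ the first sum of $\BB_{p,k}$ reproduces the restricted expansion of $\Aop^p$. This follows the same bookkeeping as in \cref{lem:uniform-circle}. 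The remaining task is to analyze what happens at block jump exactly $k$ and beyond, and to show that the term $S^k(\Id-\EE)G_{p,k}(t,\Id-\EE)$ cancels everything past $k$ except one evaluation correction at $g(u+k)$.

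For this I would use the discrete identity \cref{lem:upper-discrete} as a dictionary, through the transfer mechanism of \cref{lem:transfer}. The factor $z^{kt+1}(1-z)^{-1}G_{p,k}(t,-z/(1-z))$ is the tail of the bounded-composition count beyond $kt$. On the operator side, $S^k$ places the starting point at jump $k$. The factor $(\Id-\EE)$ corresponds to $1/(1-z)$ after the substitution $X\mapsto\Id-\EE$, the analogue of $-z/(1-z)$. The kernels beyond $k$ should then cancel term by term against the $j\geq k$ and tail parts of $\Aop^p$. I would verify this by computing kernels via \cref{lem:resolvent} at displacements $k+y$ with $y\in(0,1)$ noninteger, and checking that the difference between $\BB_{p,k}$ and the restricted $\Aop^p$ has zero kernel there. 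Beyond $k+1$, both sides vanish for $\Aop^p$ restricted, and for $\BB_{p,k}$ by the same degree argument used for $F_{p,k}$ at negative $N$.

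The evaluation correction at $g(u+k)$ is computed directly. In $\BB_{p,k}$, the zeroth-order part of $S^k(\Id-\EE)G_{p,k}(t,\Id-\EE)$ contributes $(1-m(u))G_{p,k}(t,1-m(u))$, since the evaluation part of $\EE$ is multiplication by $m(u)$ and $m(u+k)=m(u)$. The restricted $\Aop^p$ allows exactly $k$ unit jumps and $p-k$ zero jumps, which has weight $\binom pk(1-m)^km^{p-k}$. The first sum of $\BB_{p,k}$ yields that same weight from its $j<k$ parts evaluated at shift $k$, but without that weight; the net discrepancy is therefore $Q_{p,k}$. I expect this step to need care in checking which $j<k$ terms produce evaluations at $u+k$; by the construction of $\EE$ they produce none, since $S^j$ with $j<k$ and frozen $\EE$-evaluations shift by exactly $j$.

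Nonnegativity of $Q_{p,k}$ is the main obstacle. To handle it, I would write $x=m$ and interpret $(1-x)G_{p,k}(t,1-x)$ as $\Gamma_{0,1-x,0}$-type data. More concretely, I would use \cref{lem:uniform-circle} in the limit $L\to0$ with $w=0$, $\alpha=1-x$, $\beta$ free. Then $\Gamma(G_{p,k})=K_{\Aop^p}(0,k-L)/t$ has nonnegative coefficients, and letting $L\downarrow0$ this kernel splits into integrated contributions (coefficients of $t^h$, $h\geq1$, nonnegative) plus the boundary term with all jumps frozen and exactly $k$ unit jumps, whose weight is $\binom pk(1-x)^kx^{p-k}$. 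Subtracting that frozen term gives nonnegativity of every $t$-coefficient of $Q_{p,k}$. If the limit argument proves awkward, the fallback is a direct check on the constant term in $t$, where $G_{p,k}(0,X)$ is explicit, combined with \cref{lem:coordinate-kernels} for the positive powers of $t$.
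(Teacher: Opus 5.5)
Your outline matches the paper's: the first sum below $k$, resolvent cancellation above $k$, the evaluation discrepancy $Q_{p,k}$ at jump $k$, and the limit of \cref{lem:uniform-circle} for the positive powers of $t$. However, the step you flag as the main obstacle fails as written.

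The kernel $K_{\Aop^p}(0,k-L)$ is taken at a noninteger displacement, so by \cref{lem:coordinate-kernels} it contains no all-frozen terms. Frozen paths with $k$ unit jumps land at $k$, not at $k-L$, so there is no summand $\binom pk(1-x)^kx^{p-k}$ to split off. After division by $t$, the $t^0$ coefficient comes from paths with exactly one integrated jump, and its value depends on the endpoint data of the profile. There are three further slips:
\begin{itemize}
\item the limit equals $G_{p,k}(t,1-x)$ only for $\beta=0$, not for $\beta$ free;
\item you need $w=\varepsilon\downarrow0$ rather than $w=0$, since \cref{lem:uniform-circle} requires $0<L<1$;
\item $Q_{p,k}$ carries the factor $1-x$, which your subtraction ignores.
\end{itemize}

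The missing computation is explicit. Take $w=\varepsilon$, $\alpha=1-x$, $\beta=0$; then the profile has $m(0)=x$ and $m(1-\varepsilon)=1$. The unique continuous jump has length $1-\varepsilon$. All $k-1$ unit jumps must precede it, because a later unit jump has weight $1-1=0$. Zero jumps before it have weight $x$, and zero jumps after it have weight $1$. Summing over the position $i$ of the continuous jump gives
\[
 [t^0]\bigl((1-x)G_{p,k}(t,1-x)\bigr)
 =(1-x)^k\sum_{i=k}^{p}\binom{i-1}{k-1}x^{i-k}
 =\sum_{j=k}^{p}\binom pj(1-x)^jx^{p-j}.
\]
Subtracting the $j=k$ term leaves $\sum_{j=k+1}^{p}\binom pj(1-x)^jx^{p-j}\geq0$. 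Your fallback through the explicit polynomial $G_{p,k}(0,X)$ might reach the same identity, but as proposed it is not carried out.

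The cancellation above $k$ is also not established. You check it only on $(k,k+1)$ and assert that beyond $k+1$ the kernel of $\BB_{p,k}$ vanishes ``by the same degree argument used for $F_{p,k}$ at negative $N$''. That argument concerns binomial polynomials at slightly negative exponents and says nothing about operator kernels. For $j<k$, the kernel of $S^j\EE^{p-j}(\Id-\EE)^j$, divided by $t$, has $t^{p-1}$-coefficient $(-1)^j(v-u-j)^{p-1}/(p-1)!\neq0$ at every displacement above $j$, because $\EE$ integrates out to infinity. So the vanishing of the total kernel above $k$ is an exact cancellation that must be proved at every $v-u>k$.

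The paper obtains this uniformly from two facts:
\begin{itemize}
\item the kernel identity $K_{(\Id-\EE)f_p(T,\Id-\EE)}/t=-[\lambda^{p-1}](1-\lambda)^{-T}R_\lambda$;
\item the full-period relation $R_\lambda(u+j,v)=(1-\lambda)^{-(k-j)t}R_\lambda(u+k,v)$, supplied by period-uniformity.
\end{itemize}
With $T=(k-j)t-j$, the kernel of the $j$-th summand of $S^k(\Id-\EE)G_{p,k}(t,\Id-\EE)$ is exactly the negative of that of the $j$-th summand of the first sum, for all $v-u>k$. The dictionary with \cref{lem:upper-discrete} motivates this identity but cannot replace it. The transfer in \cref{lem:transfer} applies only to the specified coefficients and their kernels; it is not a substitution homomorphism.
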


\begin{proof}
\emph{Kernel cancellation and evaluations below $k$.}
Below jump $k$, the last term of $\BB_{p,k}$ vanishes, and its first
sum contains exactly the terms of $\Aop^p$ that can reach the jump.
This includes the evaluation weights at $0,\ldots,k-1$.

For cancellation above $k$, the coefficient formula for $f_p$ gives,
for any polynomial parameter $T$,
\[
 (\Id-\EE)f_p(T,\Id-\EE)
 =[\lambda^{p-1}](1-\lambda)^{-T-1}
                  (\Id-\EE)(\Id-\lambda\EE)^{-1}.
\]
The integral kernel of the expression on the right, divided by
$t$, is $-[\lambda^{p-1}](1-\lambda)^{-T}R_\lambda(u,v)$.
Indeed, the kernel of $(\Id-\lambda\EE)^{-1}$ is
$\lambda tR_\lambda$, whereas that of
$\EE(\Id-\lambda\EE)^{-1}$ is $tR_\lambda$.

Suppose $v-u>k$. For $0\leq j\leq k-1$, the kernel of
$S^j\EE^{p-j}(\Id-\EE)^j$, divided by $t$, is
\[
 [\lambda^{p-1}](-1)^j(1-\lambda)^jR_\lambda(u+j,v).
\]
The arcs $[u+j,v]$ and $[u+k,v]$ differ by $k-j$ full periods, so
$R_\lambda(u+j,v)=(1-\lambda)^{-(k-j)t}R_\lambda(u+k,v)$.
The $j$-th term of the first sum therefore has kernel divided by $t$ equal to
$\binom pj[\lambda^{p-1}](-1)^j
(1-\lambda)^{j-(k-j)t}R_\lambda(u+k,v)$.
The $j$-th term of $S^k(\Id-\EE)G_{p,k}(t,\Id-\EE)$
has the negative of this expression, by the preceding kernel identity
with $T=(k-j)t-j$. Thus the kernel above $k$ is zero.

\smallskip\noindent
\emph{The evaluation at jump $k$.}
The first sum has shifts smaller than $k$, so the evaluation weight
at $k$ is the multiplication part of the last term,
$(1-m(u))G_{p,k}(t,1-m(u))$. In $\Aop^p$, exactly $k$ unit steps and
$p-k$ zero steps give the old weight
$\binom pk(1-m(u))^km(u)^{p-k}$, since periodicity makes all endpoint
values equal to $m(u)$. Their difference is $Q_{p,k}(t,m(u))$.
Neither $\BB_{p,k}$ nor the restricted expansion of $\Aop^p$
has evaluation terms above $k$.
\cref{lem:kernel-compatibility} permits composition with external
coordinate operators at noninteger total displacement.

\smallskip\noindent
\emph{Coefficients of the correction.}
The subtracted evaluation weight has degree zero in $t$, so only
the constant coefficient requires a comparison. Set $s=1-x$ with
$0<s<1$. In \cref{lem:uniform-circle}, choose $w(u)=\varepsilon$,
$\alpha=s$, and $\beta=0$, where $0<\varepsilon<1$.
The total jump is $k-\varepsilon$; the complementary profile has
endpoints $x,1$ and
$\int_0^{1-\varepsilon}g(m(u))\,du
=(1-\varepsilon)\int_0^1g(y)\,dy$.
The operator $\Aop$ below depends on this profile.
As $\varepsilon\downarrow0$, the finite moments defining $\Gamma$
tend to zero, while the endpoints remain fixed. Consequently,
\[
 sG_{p,k}(t,s)
   =s\lim_{\varepsilon\downarrow0}
                        \frac{K_{\Aop^p}(0,k-\varepsilon)}t.
\]
The coefficients on the right are nonnegative, so all positive-degree
coefficients of $sG_{p,k}(t,s)$ are nonnegative.

For the constant coefficient, the contributing paths before division
by $t$ have one continuous step and $k-1$ unit steps.
The continuous jump must be $1-\varepsilon$. If this step has
index $i$, all unit steps must occur before it, since a later unit
step has weight $1-1=0$. There are $\binom{i-1}{k-1}$ choices for
these earlier unit steps. Every zero step before the continuous step has weight $x=1-s$,
every earlier unit step has weight $s$, and every zero step after
it has weight $1$. Including the factor $s$ therefore gives
\[
 [t^0]\bigl(sG_{p,k}(t,s)\bigr)
   =s^k\sum_{i=k}^{p}\binom{i-1}{k-1}(1-s)^{i-k}.
\]
This counts length-$p$ binary strings with at least $k$ ones,
weighted by $s$ for a one and $1-s$ for a zero: $i$ is the position
of the $k$-th one, and the suffix has total weight
$(s+(1-s))^{p-i}=1$. Grouping by the number of ones gives
\[
 [t^0]\bigl(sG_{p,k}(t,s)\bigr)
       =\sum_{j=k}^{p}\binom pj s^j(1-s)^{p-j}.
\]
Subtracting the term with exactly $k$ ones yields
\[
 [t^0]Q_{p,k}(t,x)
       =\sum_{j=k+1}^{p}\binom pj(1-x)^jx^{p-j}\geq0,
\]
with an empty sum when $k=p$. This proves all coefficients are
nonnegative for $0<x<1$; polynomial continuity gives $x=0,1$.
\end{proof}

\begin{example}[\cref{ex:four-element}, continued]
The inequality $x_1+x_2\leq t$ has block parameters $p=2$ and $k=1$.
The corresponding uniform transform and block operator are
\[
 G_{2,1}(t,X)=t+2-X,
 \qquad
 \BB_{2,1}=\EE^2+S(\Id-\EE)((t+1)\Id+\EE).
\]
At block jump one, the two frozen paths have total weight
$2m(u)(1-m(u))$, whereas $\BB_{2,1}$ has weight
$(1-m(u))(t+1+m(u))$. Their difference is
\[
 Q_{2,1}(t,m(u))=(1-m(u))(t+1-m(u)).
\]
This weight at $u+1$ supplements the original paths with block
jumps at most one.
\end{example}

\subsection{A lower interval sum}

For a hyperplane containing the root, the complementary sum is
at least $t$. The forbidden tuples on the complementary block have
sum at most $t-1$, so their coordinate upper bounds are inactive.
These tuples have generating polynomial
\[
 D_p(t,z)=\sum_{v=0}^{t-1}\binom{v+p-1}{p-1}z^v
 =\frac1{(1-z)^p}
       -\frac{z^t}{1-z}f_p\!\left(t-1,-\frac z{1-z}\right).
\]
The second equality follows by summing the tail starting in degree
$t$, as in \cref{lem:upper-discrete}. The associated polynomial
operator is $\DD_p=\EE^p-S\EE f_p(t-1,\Id-\EE)$.

\begin{lemma}\label{lem:lower-operator}
As an operator, $\DD_p$ is represented by the expansion of $\Aop^p$ restricted to block
jumps less than one, with the additional evaluation term
$-Q_p^{\mathrm{low}}(t,m(u))g(u+1)$, where
$Q_p^{\mathrm{low}}(t,x)=xf_p(t-1,1-x)$.
The integral kernels are compared at noninteger block jumps;
integer evaluation terms are recorded separately.
The polynomial $Q_p^{\mathrm{low}}(t,x)$ has nonnegative coefficients
in $t$ for $0\leq x\leq1$.
\end{lemma}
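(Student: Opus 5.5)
The plan follows the proof of \cref{lem:upper-operator}, with the roles of the two parts of $\DD_p$ interchanged. There are three things to establish: the behaviour of the kernel below jump one, its cancellation above jump one, and the evaluation term at jump one. Nonnegativity of the coefficients of $Q_p^{\mathrm{low}}$ is a separate, final step.

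\emph{Below jump one.} The shifted term $S\EE f_p(t-1,\Id-\EE)$ starts at displacement one. Hence, for noninteger block jumps in $(0,1)$, the kernel of $\DD_p$ is the kernel of $\EE^p$. By \cref{lem:resolvent}, this kernel is $t[\lambda^{p-1}]R_\lambda(u,v)$. In the jump expansion of $\Aop^p$ restricted to total jump less than one, only zero steps and integrated steps occur. The factor $S$ in $\Aop=\EE+(\Id-\EE)S$ contributes only at displacement at least one, so on this range $\Aop^p$ and $\EE^p$ agree both in kernels and in the evaluation at jump $0$, which is $m(u)^p$. \cref{lem:kernel-compatibility} then gives the identification on $[0,1)$.

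\emph{Cancellation above one.} I use the identity from the proof of \cref{lem:upper-operator}:
\[
 f_p(T,\Id-\EE)=[\lambda^{p-1}](1-\lambda)^{-T-1}(\Id-\lambda\EE)^{-1}.
\]
It follows that $\EE f_p(t-1,\Id-\EE)$ has kernel divided by $t$ equal to
\[
 [\lambda^{p-1}](1-\lambda)^{-t}R_\lambda(u,v).
\]
After the shift $S$ this becomes
\[
 [\lambda^{p-1}](1-\lambda)^{-t}R_\lambda(u+1,v).
\]
For $v-u>1$, the arcs $[u,v]$ and $[u+1,v]$ differ by one full period. Therefore
\[
 R_\lambda(u,v)=(1-\lambda)^{-t}R_\lambda(u+1,v),
\]
and the kernel of $\EE^p$ is cancelled exactly. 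There are no evaluations above one. Consequently, the representation of $\DD_p$ consists of the restricted expansion together with a single evaluation at jump one.

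\emph{The evaluation at jump one.} In $\DD_p$, the multiplication part of $-S\EE f_p(t-1,\Id-\EE)$ is
\[
 -m(u)f_p(t-1,1-m(u))\,g(u+1),
\]
where periodicity makes $m(u+1)=m(u)$. The restricted expansion of $\Aop^p$ has no evaluation at jump one, since a unit step would give block jump at least one. The difference between the two is therefore $-Q_p^{\mathrm{low}}(t,m(u))g(u+1)$, as claimed.

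\emph{Nonnegativity of $Q_p^{\mathrm{low}}$.} This is the main obstacle. Recall
\[
 f_p(T,X)=[\lambda^{p-1}]\frac{(1-\lambda)^{-T-1}}{1-\lambda(1-X)}.
\]
With $X=1-x$ this gives
\[
 f_p(t-1,1-x)=[\lambda^{p-1}]\frac{(1-\lambda)^{-t}}{1-\lambda x}=\sum_{i=0}^{p-1}x^{p-1-i}\binom{t+i-1}{i}.
\]
Each $\binom{t+i-1}{i}=t(t+1)\cdots(t+i-1)/i!$ has nonnegative coefficients in $t$, and the $i=0$ term contributes the constant $x^{p-1}\geq0$. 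Multiplying by $x\in[0,1]$ preserves nonnegativity of every coefficient. Alternatively, one could mimic the limiting argument with \cref{lem:uniform-circle}, but the direct expansion suffices.

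Two points need care. The first is sign bookkeeping: in $\DD_p$ the correction enters with a minus sign, which is consistent with the lower-sum constraint removing tuples. The second is confirming that the assignment of $m$ at integer translates does not alter the evaluation coefficient $m(u)$ at jump one. Both are handled by periodicity of $m$ together with \cref{lem:kernel-compatibility}.
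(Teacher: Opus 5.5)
Your proposal is correct and follows the paper's proof essentially step for step. The steps are: agreement of $\DD_p$ with $\EE^p$ and with the restricted $\Aop^p$ below jump one; cancellation above jump one via the full-period identity $(1-\lambda)^{-t}R_\lambda(u+1,v)=R_\lambda(u,v)$; the shifted evaluation weight $-m(u)f_p(t-1,1-m(u))$ at jump one, with no evaluations beyond it; and nonnegativity from the expansion $f_p(t-1,1-x)=\sum_{j=0}^{p-1}x^{p-1-j}\binom{t+j-1}{j}$.
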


\begin{proof}
Below jump one, all shifted terms vanish, so $\DD_p$, $\EE^p$, and
$\Aop^p$ have the same kernel and evaluation terms. For $v-u>1$,
the coefficient formula for $f_p$ gives
\[
 \frac{K_{\EE f_p(t-1,\Id-\EE)}(u+1,v)}t
       =[\lambda^{p-1}](1-\lambda)^{-t}R_\lambda(u+1,v).
\]
A full uniform period gives
$(1-\lambda)^{-t}R_\lambda(u+1,v)=R_\lambda(u,v)$,
so this kernel cancels that of $\EE^p$ above jump one.
At jump one, the shifted term has evaluation weight
$-m(u)f_p(t-1,1-m(u))$; at zero, the weight agrees with $\EE^p$.
There are no other evaluation terms.
\cref{lem:kernel-compatibility} permits insertion between external
coordinate operators at noninteger total displacement.

The coefficient representation for $f_p$ yields
\[
 f_p(t-1,1-x)
 =[\lambda^{p-1}]\frac{(1-\lambda)^{-t}}{1-\lambda x}
 =\sum_{j=0}^{p-1}x^{p-1-j}\binom{t+j-1}{j}.
\]
For $j=0$ the binomial factor is $1$. For $j\geq1$ it is
$t(t+1)\cdots(t+j-1)/j!$, which has nonnegative coefficients.
Multiplication by $x\geq0$ proves the assertion.
\end{proof}

\subsection{Polynomial rooted counts and exact transfer}

Let $M$ be the connected paving positroid of \cref{lem:geometry},
on $\{e,1,\ldots,q\}$ with rank $r$ and dependent hyperplanes
$H_1,\ldots,H_\ell$. Let $F_i^{\mathrm{forb}}(a,t)$ count the points
in the uniform rooted slice violating the inequality for $H_i$.
Their disjointness gives, for integers $t\geq1$ and $0\leq a\leq t$,
\[
 F_e(a,t)=F_{q,r}(a,t)-\sum_{i=1}^{\ell}F_i^{\mathrm{forb}}(a,t).
\]
This is the rooted version of the established paving cap subtraction
\cite[Proposition~5.10 and Theorem~5.11]{HanelyEtAl2023}.

The block associated with $H_i$ is $H_i$ when $e\notin H_i$,
and $E\setminus H_i$ when $e\in H_i$. If it occupies coordinates
$c,\ldots,c+p-1$, where $1\leq c\leq q-p+1$, its full inserted
forbidden operator is
\[
 \mathcal T_i^{\mathrm{forb}}=
 \begin{cases}
 \Aop^{c-1}(\Aop^p-\BB_{p,r-1})\Aop^{q-c-p+1},
       &e\notin H_i,\quad p=|H_i|,\\
 \Aop^{c-1}\DD_p\Aop^{q-c-p+1},
       &e\in H_i,\quad p=|E\setminus H_i|.
 \end{cases}
\]
The block position is determined by the fixed cyclic order.

\begin{lemma}\label{lem:rooted}
The rooted count $F_e(a,t)$ is represented on its integer triangle by
a polynomial of total degree at most $q-1$. Its binomial transform
$G_e(t,X)=(\NN F_e)(t,X)$ satisfies, for every finite piecewise
constant $w:[0,1]\to[0,1]$ with $L=\int_0^1w(u)\,du\in(0,1)$
and every $\alpha,\beta\in[0,1]$,
\[
 \Gamma_{w,\alpha,\beta}(G_e)
   =\frac1tK_{\Aop^q-\sum_{i=1}^{\ell}
                         \mathcal T_i^{\mathrm{forb}}}(0,r-L).
\]
\end{lemma}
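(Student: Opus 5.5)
We reduce both assertions to the decomposition
\[
 F_e=F_{q,r}-\sum_{i=1}^{\ell}F_i^{\mathrm{forb}},
\]
established above from the disjointness in \cref{lem:geometry}, and we treat each summand separately. By linearity of $\NN$ and of $\Gamma_{w,\alpha,\beta}$, it suffices to show two things for each summand: it is a polynomial of total degree at most $q-1$ on the integer triangle, and its transform satisfies the corresponding kernel identity. For the uniform term both facts are already available. The inclusion--exclusion formula for $F_{q,r}$ has total degree $q-1$, and \cref{lem:uniform-circle} with $p=q$, $k=r$ gives $\Gamma_{w,\alpha,\beta}(G_{q,r})=K_{\Aop^q}(0,r-L)/t$.

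For a forbidden term, we write the rooted slice as a coefficient extraction $[z^{rt-a}]$ of a product of generating polynomials. The block generating polynomial is $(\sum_{x=0}^t z^x)^p-B_{p,r-1}(t,z)$ when $e\notin H_i$, and $D_p(t,z)$ when $e\in H_i$. Each of the remaining $q-p$ external coordinates contributes the factor $(1-z^{t+1})/(1-z)$. We substitute \cref{lem:upper-discrete} or the formula for $D_p$ and expand the external factors binomially. The resulting terms have the form $z^{jt+\delta}(1-z)^{-b}$, possibly multiplied by the polynomial $G_{p,r-1}(t,-z/(1-z))$ or $f_p(t-1,-z/(1-z))$, whose powers $z^s/(1-z)^{s+1}$ again lie in this family.

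\cref{lem:transfer} converts each coefficient $[z^{rt-a}]z^{jt+\delta}(1-z)^{-b}$ into the binomial polynomial $\binom{(r-j)t-a-\delta+b-1}{b-1}$, valid on the entire triangle. These polynomials have total degree $b-1\le q-1$, which gives polynomiality and the degree bound. Their transforms are $f_b((r-j)t-\delta,X)$, and the same lemma identifies $\Gamma_{w,\alpha,\beta}$ of each one with the kernel of $S^j\EE^{b-\delta}(\EE-\Id)^\delta$ at $(0,r-L)$.

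The main obstacle is bookkeeping. The algebraic expansion of $\mathcal T_i^{\mathrm{forb}}$ must match the generating-function expansion term by term, with the same $j$, $\delta$ and $b$, and the exponent ranges must satisfy $0\le j\le r-1$ and $0\le\delta\le b-1$ so that \cref{lem:transfer} applies. Terms violating these ranges must be shown to vanish on both sides. For terms with $j\ge r$, the monomial starts beyond degree $rt$, and the kernel cannot reach total jump $r-L$.

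To check the correspondence, we use the dictionary $z^{t+1}\leftrightarrow S(\Id-\EE)^{\pm}$ and $1/(1-z)\leftrightarrow\EE$, together with $-z/(1-z)\leftrightarrow \Id-\EE$ (interpreted through the coefficient formula $f_b=[\lambda^{b-1}](1-\lambda)^{-T-1}/(1-\lambda(1-X))$). Because $S$ and $\EE$ commute, the placement of the block among the external $\Aop$ factors is irrelevant for the kernel. \cref{lem:kernel-compatibility} then justifies composing the block operator $\BB_{p,r-1}$ or $\DD_p$ with the external factors $\Aop^{c-1}$ and $\Aop^{q-c-p+1}$ at the noninteger endpoint $r-L$.

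The delicate terms are the tail terms $z^{kt+1}/(1-z)\cdot G_{p,k}$ and $z^t/(1-z)\cdot f_p$. There the shift is combined with $\delta$ differently: we get $S^{k}$ together with an extra factor $(\Id-\EE)$, and the $t+1$ versus $t$ exponent offsets must be absorbed into $\delta$. We would verify these first by writing $G_{p,k}(t,-z/(1-z))/(1-z)$ as $\sum_s h_s(-1)^s z^s(1-z)^{-s-1}$. Matching this against $(\Id-\EE)\sum_s h_s(\Id-\EE)^s$, via $f_b(T,X)$'s expansion in powers of $X$, should reproduce the transfer lemma's parameters.

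Finally, the identity holds for every admissible $(w,\alpha,\beta)$ because each step was proved for arbitrary complementary profiles. Summing over the summands then gives the stated formula.
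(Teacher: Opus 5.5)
Your plan follows the paper's route: the uniform term from \cref{lem:uniform-circle}, the block generating functions multiplied by the external factor $\bigl((1-z^{t+1})/(1-z)\bigr)^{q-p}$, term-by-term use of \cref{lem:transfer}, and regrouping into $\BB_{p,r-1}\Aop^{q-p}$ or $\DD_p\Aop^{q-p}$ via $S\EE=\EE S$. However, the bookkeeping you defer is the actual content of this proof. Two of the steps you do state would not go through as written.

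\textbf{The degree bound.} The tail terms are not bare binomial polynomials. In the upper case they are $h_s(t)\binom{t-a-1+q-p}{s+q-p}$, where $h_s$ are the coefficients of $G_{p,r-1}$. In the lower case they are $g_s(t)$ times a binomial polynomial of degree $s+q-p$, where $g_s=(-1)^s\binom{t-1+p}{p-1-s}$ are the coefficients of $f_p(t-1,X)$. So ``total degree $b-1\le q-1$'' is not the degree of these terms. You need $\deg_t h_s\le p-1-s$. This follows from the total degree $p-1$ of $F_{p,r-1}$ by the coefficient comparison noted after the definition of $\NN$. Then $(p-1-s)+(s+q-p)=q-1$; without this input nothing bounds the product by $q-1$.

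\textbf{The range condition $\delta\le b-1$.} In the upper tail the shift is $r-1+j$, so the only retained terms have external index $j=0$, with $\delta=1+s$ and $b=1+s+q-p$. These terms are nonzero. Your fallback of showing that out-of-range terms vanish on both sides is therefore unavailable; you must show these terms are in range. That requires $q-p=n-1-|H_i|\ge1$, i.e.\ $|E\setminus H_i|\ge2$. This is where connectedness enters: a one-element complement of a hyperplane would be a coloop, as in \cref{lem:geometry}. Without it one would have $\delta=b$. The extracted binomial polynomial $\binom{t-a-1}{s}$ would then equal $(-1)^s$ at $a=t$, where the true coefficient is $0$.

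The other families are automatic:
\begin{enumerate}
\item The uniform and main terms have $\delta$ equal to the shift, so $\delta\le r-1\le q-1=b-1$ when retained.
\item The lower tail has $\delta=s+j\le s+q-p=b-1$.
\item Every term with shift at least $r$ has $\delta\ge1$, which is exactly what your ``starts beyond degree $rt$'' vanishing argument needs.
\end{enumerate}

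Two smaller points. First, state the correspondence monomial by monomial, $z^{jt+\delta}(1-z)^{-b}\mapsto S^j\EE^{b-\delta}(\EE-\Id)^{\delta}$, rather than as a substitution for $z^{t+1}$. It is not a homomorphism, and $z^{t+1}$ by itself would require $\EE^{-1}$. Second, add that a polynomial vanishing on the integer triangle is zero, so that $G_e=\NN F_e$ does not depend on the decomposition you used to construct it.
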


\begin{proof}
\cref{lem:uniform-circle} gives the uniform term. For a block
of size $p$, the $q-p$ external coordinates contribute
\[
 \left(\frac{1-z^{t+1}}{1-z}\right)^{q-p}
       =\sum_{j=0}^{q-p}(-1)^j\binom{q-p}{j}
                         \frac{z^{j(t+1)}}{(1-z)^{q-p}}.
\]
The parameters in \cref{lem:transfer} are recorded below.
In the upper rows, $k=r-1$ and $0\leq v\leq k-1$.
In both boundary rows, $0\leq s\leq p-1$, and every row has
$0\leq j\leq q-p$.
\[
 \begin{array}{c|c|c|c|c}
  \text{Term}&b&\text{Period shift}&\delta&\text{Retained shift}\\ \hline
  \text{Upper main}&q&v+j&v+j&v+j<r\\
  \text{Upper boundary}&1+s+q-p&k+j&1+s+j&k+j<r\\
  \text{Lower main}&q&j&j&j<r\\
  \text{Lower boundary}&1+s+q-p&1+j&s+j&1+j<r
 \end{array}
\]
For an upper block, $p=|H_i|$, $k=r-1$, and $q-p\geq1$.
The retained count is the coefficient of $z^{rt-a}$ in the product
of $B_{p,k}(t,z)$ and the external factor. In the first part of
\cref{lem:upper-discrete}, an internal index $v$ and an external
index $j$ give the factor
\[
 \frac{z^{(v+j)t+v+j}}{(1-z)^q},
 \qquad 0\leq v\leq k-1,\quad 0\leq j\leq q-p.
\]
If $v+j<r$, \cref{lem:transfer} applies with
$b=q$ and constant shift $\delta=v+j\leq q-1$.
If $v+j\geq r$, the remaining exponent is negative, since
$v+j\geq1$, and the corresponding periodic shift cannot reach $r-L$.

The boundary polynomial is
$G_{p,k}(t,X)=\sum_{s=0}^{p-1}h_s(t)X^s$.
Multiplying the tail in \cref{lem:upper-discrete} by the
external factor gives the $(s,j)$-term
\[
 (-1)^{s+j+1}h_s(t)\binom{q-p}{j}
       \frac{z^{(k+j)t+1+s+j}}{(1-z)^{1+s+q-p}}.
\]
The signs come from subtracting the tail, substituting
$-z/(1-z)$ in $G_{p,k}$, and expanding the external factor.
A retained shift satisfies $k+j<r$, hence $j=0$ because $k=r-1$.
Consequently $\delta=1+s\leq s+q-p=b-1$, as $q-p\geq1$.
\cref{lem:transfer} identifies the transformed retained term
with $t^{-1}$ times the kernel at $(0,r-L)$ of
\[
 (-1)^{s+1}h_s(t)S^k\EE^{q-p}(\EE-\Id)^{s+1}
       =h_s(t)S^k\EE^{q-p}(\Id-\EE)^{s+1}.
\]
Terms with $j\geq1$ have negative remaining exponents because
$k+j\geq r$ and their constant shifts are at least one.
The corresponding operator summands below also have zero kernel
at $(0,r-L)$: each contains $S^{k+j}$, with $k+j\geq r>r-L$,
and all remaining jumps are nonnegative. Adding these zero
contributions and using the binomial expansion of $\Aop^{q-p}$ gives
\begin{align*}
\sum_{s=0}^{p-1}\sum_{j=0}^{q-p}
 h_s(t)\binom{q-p}{j}S^{k+j}\EE^{q-p-j}(\Id-\EE)^{s+j+1}=-S^k(\EE-\Id)G_{p,k}(t,\Id-\EE)\Aop^{q-p}.
\end{align*}
This is the boundary part of $\BB_{p,k}$ times the external operator.
Its factors can be placed at the original block, since the complete
polynomials in $\EE,S$ commute. This commutation concerns the
complete operator identity; it does not rearrange the individual
path integration domains. \cref{lem:kernel-compatibility}
identifies the resulting polynomial and block kernels at the
noninteger total displacement $r-L$.

Each binomial polynomial from the first part has total degree $q-1$.
In a boundary term, $\deg_t h_s\leq p-1-s$, and the extracted
binomial polynomial has degree $s+q-p$. Their product therefore has
total degree at most $q-1$. The upper retained count and its forbidden
count consequently have polynomial representations with the required
transferred operators.

For a hyperplane containing $e$, set $p=|E\setminus H_i|$.
Here $q-p=|H_i|-1\geq r-1$. Its forbidden count is the coefficient
of $z^{rt-a}$ in the product of $D_p(t,z)$ and the external factor. The first
term is covered by the preceding verification with $v=0$.
For a boundary coefficient of $f_p(t-1,X)$ with index $s$,
the last row gives $\delta=s+j\leq s+q-p=b-1$, since $j\leq q-p$.
\cref{lem:transfer} therefore applies whenever $1+j<r$.
Whenever $1+j\geq r$, we have
$j\geq r-1\geq1$, so the remaining exponent is negative and
the periodic shift cannot reach $r-L$.
The transferred boundary part is
$-S\EE f_p(t-1,\Id-\EE)\Aop^{q-p}$, as in $\DD_p$.
Its coefficient of index $s$ has degree at most $p-1-s$, and its
binomial polynomial has degree $s+q-p$, so their product has total
degree at most $q-1$.

Subtracting the forbidden counts constructs $F_e$ and its transferred
image. Uniqueness follows because a polynomial vanishing on the integer
triangle is zero: for each fixed integer $a\geq0$ it vanishes at
infinitely many $t$, so its coefficients in $t$ vanish at every such $a$.
\end{proof}

\begin{example}[\cref{ex:four-element}, continued]
In the uniform rooted slice, the points violating $x_1+x_2\leq t$
have $t+1\leq s=x_1+x_2\leq2t-a$. There are $2t-s+1$ bounded
pairs with sum $s$, so their count is
\[
 F_H^{\mathrm{forb}}(a,t)
 =\sum_{s=t+1}^{2t-a}(2t-s+1)
 =\binom{t+1}{2}-\binom{a+1}{2},
\]
where the sum is empty when $a=t$. Its transform is
\[
 (\NN F_H^{\mathrm{forb}})(t,X)
 =\frac{t(t+1)}2+X-X^2.
\]
The constrained block is $(1,2)$, and coordinate $3$ contributes
$\Aop$. Thus
\[
 \mathcal T_H^{\mathrm{forb}}
       =(\Aop^2-\BB_{2,1})\Aop,
 \qquad
 \Aop^3-\mathcal T_H^{\mathrm{forb}}=\BB_{2,1}\Aop.
\]
For $0<L<1$, \cref{lem:rooted} gives
\[
 \Gamma_{w,\alpha,\beta}(G_e)
       =\frac{K_{\BB_{2,1}\Aop}(0,2-L)}t.
\]
For coordinate jumps $d_1,d_2,d_3$, the allowed paths have
$d_1+d_2\leq1$, and the inserted correction is
\[
 (\mathcal C_Hg)(u)
   =Q_{2,1}(t,m(u))(\Aop g)(u+1).
\]
At total jump $2-L$, its remaining coordinate must make the
continuous jump $1-L$. Since $m(0)=1-\alpha$, its normalized
kernel contribution is
\[
 \frac{K_{\mathcal C_H}(0,2-L)}t
       =Q_{2,1}(t,1-\alpha)=\alpha(t+\alpha).
\]
The external factor is therefore essential: it turns the block
evaluation into a contribution to the full integral kernel.
\end{example}

\subsection{Disjoint forbidden events}

At total jump $r-L$, the expansion of $\Aop^q$ has coordinate jumps
$d_1,\ldots,d_q\in[0,1]$ with sum $r-L$. Write
$d(A)=\sum_{j\in A}d_j$ for $A\subseteq\{1,\ldots,q\}$.
The forbidden conditions are $d(H_i)>r-1$ if $e\notin H_i$ and
$d(E\setminus H_i)<1$ if $e\in H_i$.

\begin{lemma}\label{lem:disjoint-paths}
For $0<L<1$, these forbidden conditions in the path expansion of
$\Aop^q$ are pairwise disjoint.
\end{lemma}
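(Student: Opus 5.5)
Set $d_e=1-L\in(0,1)$ for the root coordinate and view $(d_e,d_1,\ldots,d_q)$ as a point of $[0,1]^{E}$ whose coordinate sum is $(1-L)+(r-L)=r+1-2L$. This point lies in $[0,1]^E$ but not in $\Delta(r,n)$. The plan is to mimic the final step in the proof of \cref{lem:geometry}. That step rests on modularity of the sum, $d(H_i)+d(H_j)=d(H_i\cup H_j)+d(H_i\cap H_j)$, together with the bounds $d(H_i\cup H_j)\le\min(|H_i\cup H_j|,\text{total})$ and $d(H_i\cap H_j)\le|H_i\cap H_j|\le r-2$, which hold because each $d_j\in[0,1]$. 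Those bounds are unaffected by the modified total.

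First we rewrite every forbidden condition as an upper condition on an $E$-set. If $e\notin H_i$, the condition is $d(H_i)>r-1$. If $e\in H_i$, the condition $d(E\setminus H_i)<1$ is equivalent to
\[
 d(H_i)=r+1-2L-d(E\setminus H_i)>r-2L .
\]
In both cases the forbidden event says that $d(H_i)$ exceeds roughly $r-1$. The threshold is $r-1$ in the first case and $r-2L$ in the second.

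Next we check disjointness pair by pair, using
\[
 d(H_i)+d(H_j)\le d(H_i\cup H_j)+r-2 .
\]
Suppose first that $e\notin H_i\cup H_j$. The union avoids the root, so $d(H_i\cup H_j)\le r-L$. Both events together would give $2r-2<d(H_i)+d(H_j)\le 2r-2-L$, which is impossible.

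Suppose next that $e$ lies in exactly one of them, say $e\in H_j$. Then the two events give $d(H_i)+d(H_j)>2r-1-2L$. The right side of the modular bound is at most $r+1-2L+r-2=2r-1-2L$, again a contradiction.

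Suppose finally that $e\in H_i\cap H_j$. Here the intersection contains $e$, so we must bound $d(H_i\cap H_j)$ more carefully: we get $d(H_i\cap H_j)\le(1-L)+(r-3)$ because $|H_i\cap H_j|\le r-2$ and one of its elements is $e$. The sum of the two events then exceeds $2r-4L$, while the bound gives at most $r+1-2L+r-2-L=2r-1-3L$. This is not automatically a contradiction when $L<1$, so this case is the expected obstacle.

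To handle it, I would instead argue directly on the complements, which do not contain $e$. Both conditions, $d(E\setminus H_i)<1$ and $d(E\setminus H_j)<1$, involve nonroot coordinates only. By the inclusion–exclusion above, $|(E\setminus H_i)\cup(E\setminus H_j)|=n-|H_i\cap H_j|\ge n-r+2$. The complement of this union is $H_i\cap H_j$, which has at most $r-2$ elements, including $e$. Hence the nonroot coordinates outside the union number at most $r-3$ and contribute at most $r-3$. This forces $d\bigl((E\setminus H_i)\cup(E\setminus H_j)\bigr)\ge (r-L)-(r-3)=3-L>2$. Subadditivity would then give $d(E\setminus H_i)+d(E\setminus H_j)>2$, contradicting both sums being below $1$.

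The same complement trick can also replace the estimate in the mixed case if needed. I expect the main care to lie in getting the root bookkeeping right, namely remembering that the root is not a path coordinate, and in checking the boundary conventions: strict inequalities, and jumps equal to $0$ or $1$ in the frozen steps.
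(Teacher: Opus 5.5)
Your argument is correct, but it takes a longer route than the paper because of how you chose the root value. The paper sets $y_e=L$ rather than $1-L$. Then $y\in[0,1]^E$ and $y(E)=L+(r-L)=r$, so $y\in\Delta(r,n)$. For $e\in H_i$ the condition $d(E\setminus H_i)<1$ becomes $y(H_i)=r-d(E\setminus H_i)>r-1$, which is the same threshold as for $e\notin H_i$. Disjointness then follows at once from the final paragraph of \cref{lem:geometry}, with no case split.

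In particular, your obstacle case $e\in H_i\cap H_j$ disappears with this choice. Once $y_e\le 1$ and the total is exactly $r$, the bound $y(H_i\cap H_j)\le|H_i\cap H_j|\le r-2$ holds whether or not the intersection contains the root. Your choice $d_e=1-L$ breaks this symmetry, since the thresholds become $r-1$ and $r-2L$. That is why you needed the separate complement estimate $d\bigl(E\setminus(H_i\cap H_j)\bigr)\ge 3-L>2$.

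That estimate is valid, and for $r=2$ the case is vacuous because $H_i\cap H_j=\emptyset$. Your mixed case is also sound: the strict inequalities give a sum $>2r-1-2L$ against the bound $\le 2r-1-2L$. Your route uses from \cref{lem:geometry} only the bound $|H_i\cap H_j|\le r-2$, and it re-derives the rest. The paper's route is a one-line reduction that identifies path disjointness with cap disjointness.

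One small inaccuracy: when $L=1/2$ your point does lie in $\Delta(r,n)$, so the remark that it does not is false there, though harmless.
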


\begin{proof}
Set $y_e=L$ and $y_j=d_j$ for $1\leq j\leq q$.
Then $y\in[0,1]^E$ and $y(E)=L+\sum_{j=1}^{q}d_j=r$, so
$y\in\Delta(r,n)$. If $e\notin H_i$, the forbidden condition is
$y(H_i)>r-1$. If $e\in H_i$, the identity
$y(H_i)=r-d(E\setminus H_i)$ gives the same condition.
These regions are disjoint by \cref{lem:geometry}.
\end{proof}

\begin{proposition}\label{prop:kernel-positive}
For every finite piecewise constant function $w:[0,1]\to[0,1]$ and
all $\alpha,\beta\in[0,1]$, the polynomial
$\Gamma_{w,\alpha,\beta}(G_e)$ has nonnegative coefficients in $t$.
\end{proposition}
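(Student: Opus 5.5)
The plan is to evaluate the right-hand side of \cref{lem:rooted}, namely $t^{-1}K_{\Aop^q-\sum_i\mathcal T_i^{\mathrm{forb}}}(0,r-L)$, directly as a sum over jump paths with nonnegative weights. First treat the degenerate cases separately. If $L=\int_0^1w=0$, then $w=0$ almost everywhere and $\Gamma_{0,\alpha,\beta}(G_e)=G_e(t,\alpha)$ or its $\beta$-analogue, up to moments. I would obtain this by continuity: replace $w$ by $w+\varepsilon\mathbf1_{[0,1]}$ (or $(1-\varepsilon)w$ when $L=1$) and let $\varepsilon\downarrow0$. Each coefficient of $\Gamma$ depends polynomially on finitely many moments and on $\alpha,\beta$, so nonnegativity passes to the limit. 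It therefore suffices to prove the claim for $0<L<1$, where \cref{lem:rooted} applies with the complementary profile.

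For $0<L<1$, I would expand $\Aop^q$ as in \cref{lem:coordinate-kernels}: at total noninteger displacement $r-L$, its kernel divided by $t$ is a sum over paths $(d_1,\dots,d_q)$ of nonnegative frozen weights times powers of $t$. Next I would rewrite each $\mathcal T_i^{\mathrm{forb}}$ by \cref{lem:upper-operator} or \cref{lem:lower-operator}, inserted between the external factors $\Aop^{c-1}$ and $\Aop^{q-c-p+1}$ via \cref{lem:kernel-compatibility}.

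For an upper hyperplane, $\Aop^p-\BB_{p,r-1}$ equals the restriction of $\Aop^p$ to block jumps exceeding $r-1$, minus the evaluation term $Q_{p,r-1}(t,m(u))g(u+r-1)$. Hence $-\mathcal T_i^{\mathrm{forb}}$ contributes minus the forbidden paths, $d(H_i)>r-1$, plus $\Aop^{c-1}Q_{p,r-1}\Aop^{q-c-p+1}$.

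For a lower hyperplane, $\DD_p$ equals the restriction of $\Aop^p$ to block jumps below one, minus $Q^{\mathrm{low}}_p$ at jump one. So $-\mathcal T_i^{\mathrm{forb}}$ contributes minus the forbidden paths, $d(E\setminus H_i)<1$, plus $\Aop^{c-1}Q^{\mathrm{low}}_p\Aop^{q-c-p+1}$.

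Both correction kernels have nonnegative coefficients. They compose nonnegative evaluation weights (by \cref{lem:upper-operator,lem:lower-operator}, with $m\in[0,1]$) with external path expansions whose weights are nonnegative by the argument of \cref{lem:coordinate-kernels}. Here the displacement $r-L$ stays noninteger after the integer shift, so only kernel parts of the external factors contribute, and these carry at least one integration.

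The main step is handling the subtracted forbidden paths. By \cref{lem:disjoint-paths}, the forbidden path sets for distinct $i$ are pairwise disjoint subsets of the path set of $\Aop^q$ at displacement $r-L$. Therefore $K_{\Aop^q}-\sum_i(\text{forbidden restricted kernels})$ is the integral of the same nonnegative weights over the complement of a disjoint union. This is a sum of nonnegative terms $t^h\int_{\Delta'}\omega$ over polytopal domains cut by the extra linear inequalities, and each has nonnegative coefficients.

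The obstacle I anticipate is bookkeeping: the restricted block expansions must match exactly the subdomains of the global path expansion. The retained zero and unit steps, and the boundary cases where a block jump equals $r-1$ or $1$ exactly, must be accounted for only through the $Q$ corrections and not double-counted. I would verify this using the convention in \cref{lem:upper-operator} that eligible frozen steps are retained and the inequality is imposed on integration domains. Boundary equalities have measure zero in integrated components, and for fully frozen block jumps they are precisely the evaluations compared there. Summing the nonnegative remainder and the nonnegative corrections then gives the proposition.
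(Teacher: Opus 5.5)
Your proposal is correct and follows the paper's proof. For $0<L<1$ you write each $\mathcal T_i^{\mathrm{forb}}$ as the forbidden restriction of the common path expansion minus the nonnegative correction from \cref{lem:upper-operator,lem:lower-operator}, use \cref{lem:disjoint-paths} to turn $\Aop^q$ minus the forbidden terms into the allowed restriction, add the corrections, and pass to $L\in\{0,1\}$ by continuity in the moments; the only difference is cosmetic: the paper uses the single perturbation $w_\varepsilon=(1-\varepsilon)w+\varepsilon/2$, which handles both endpoint cases at once and stays in $[0,1]$ even at partition points, and your aside $\Gamma_{0,\alpha,\beta}(G_e)=G_e(t,\alpha)$ holds only when $\beta=0$, though you never use it.
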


\begin{proof}
Assume first that $0<L<1$. For $1\leq i\leq\ell$, let
$\mathcal C_i$ denote the full inserted boundary correction.
Its block factor sends $g$ to the function
\[
 u\longmapsto
 \begin{cases}
 Q_{p,r-1}(t,m(u))g(u+r-1),&e\notin H_i,\quad p=|H_i|,\\
 Q_p^{\mathrm{low}}(t,m(u))g(u+1),
                         &e\in H_i,\quad p=|E\setminus H_i|.
 \end{cases}
\]
The block position and external factors are those of
$\mathcal T_i^{\mathrm{forb}}$. By \cref{lem:upper-operator,lem:lower-operator},
$\mathcal T_i^{\mathrm{forb}}$ is the original expansion restricted
to violation of constraint $i$, minus $\mathcal C_i$.

On each domain of the common expansion of $\Aop^q$, subtracting
the original forbidden terms multiplies the integrand by
\[
 1-\sum_{i=1}^{\ell}\mathbf1_{\{\text{the path violates constraint }i\}}.
\]
By \cref{lem:disjoint-paths}, this is $1$ for allowed paths
and $0$ otherwise; the same holds for zero and unit evaluation terms.
\cref{lem:kernel-compatibility} therefore gives, at $r-L$,
\[
 K_{\Aop^q-\sum_{i=1}^{\ell}\mathcal T_i^{\mathrm{forb}}}(0,r-L)
 =K_{\left.\Aop^q\right|_{\mathrm{allowed}}}(0,r-L)
      +\sum_{i=1}^{\ell}K_{\mathcal C_i}(0,r-L).
\]
The restriction imposes all hyperplane constraints on the original
integration domains and evaluation terms. Their coefficients are
nonnegative, as are those of the corrections by
\cref{lem:upper-operator,lem:lower-operator};
external $\Aop$ factors preserve nonnegativity.

The corrections are added without further restrictions on their
domains or reconstruction of internal block coordinates.
Every term reaching the noninteger jump $r-L$ has a continuous step,
since all evaluations, including corrections, have integer jumps.
Its kernel is therefore divisible by $t$, and division preserves
nonnegative coefficients. \cref{lem:rooted} proves the assertion.

For $L=0,1$, set $w_\varepsilon(u)=(1-\varepsilon)w(u)+\varepsilon/2$
with $0<\varepsilon<1$. These finite piecewise constant functions
have values in $[0,1]$ and integrals in $(0,1)$, and for every integer
$j\geq0$,
$\int_0^1u^jw_\varepsilon(u)\,du\to\int_0^1u^jw(u)\,du$
as $\varepsilon\downarrow0$.
Keep $\alpha,\beta$ fixed. Each coefficient of $\Gamma$ is polynomial
in finitely many moments and the endpoints, so
$\Gamma_{w_\varepsilon,\alpha,\beta}(G_e)$ converges coefficientwise
to $\Gamma_{w,\alpha,\beta}(G_e)$, preserving nonnegativity.
\end{proof}

\section{Proof of the main theorem}\label{sec:main-proof}

\subsection{Root summation}

\begin{proposition}\label{prop:closing}
The Ehrhart polynomial satisfies
\[
 \ehr_M(t)=G_e(t,1)+t\int_0^1\Gamma_v(G_e)\,dv.
\]
The coefficients of $G_e(t,1)$ and of $\Gamma_v(G_e)$ are
nonnegative for every $0\leq v\leq1$.
\end{proposition}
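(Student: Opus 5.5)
The identity is a direct consequence of the root summation machinery. I would begin by writing $\ehr_M(t)=\sum_{a=0}^{t}F_e(a,t)$. This holds because the root coordinate $x_e$ of a point of $tP(M)\cap\Z^{q+1}$ ranges over $0,\ldots,t$. By \cref{lem:rooted}, $F_e$ agrees on the integer triangle with a polynomial of total degree at most $q-1$. Its binomial expansion therefore gives $\deg_t h_s\leq q-1-s$, and the telescoping identity gives
\[
\sum_{a=0}^{t}F_e(a,t)=\sum_s h_s(t)\binom{t+s+1}{s+1}.
\]
\cref{lem:root-summation} then turns the right side into $G_e(t,1)+t\int_0^1\Gamma_v(G_e)\,dv$. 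This proves \eqref{eq:root-goal} for all integers $t\geq1$. The case $t=0$ is immediate, and since both sides are polynomials the identity holds identically.

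For nonnegativity of $\Gamma_v(G_e)$, I would use the specialization \eqref{eq:gamma-specialization}, $\Gamma_v=\Gamma_{\mathbf1_{[1-v,1]},1,1-v}$. The weight $w=\mathbf1_{[1-v,1]}$ is finite piecewise constant with values in $[0,1]$, and the endpoints $\alpha=1$ and $\beta=1-v$ lie in $[0,1]$. \cref{prop:kernel-positive} therefore applies directly for every $0\leq v\leq1$. This includes $v=0$ and $v=1$, where $L\in\{0,1\}$; that case is covered by the limiting argument at the end of that proposition.

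For $G_e(t,1)$, I would use the observation $\Gamma_{0,\alpha,0}(G)=G(t,\alpha)$, so that $G_e(t,1)=\Gamma_{0,1,0}(G_e)$. The zero function is an admissible piecewise constant $w$ with $L=0$, so \cref{prop:kernel-positive} again gives nonnegative coefficients. This boundary case is the main point requiring care. I would check that the proposition's $\varepsilon$-perturbation $w_\varepsilon=\varepsilon/2$ is legitimate here and that coefficients converge. This holds because each coefficient of $\Gamma$ is a polynomial in finitely many moments of $w$ and in $\alpha,\beta$.

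An alternative cross-check is the specialization $v=0$ of \eqref{eq:gamma-specialization}. There $\Gamma_0(G)=\sum_s h_s(t)[\lambda^s](1-\lambda)^{-2}$, which is not $G(t,1)$. So the direct specialization $\Gamma_{0,1,0}$ is the right route.

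The only potential obstacle is thus the degenerate $L=0$ endpoint, which the limit in \cref{prop:kernel-positive} already handles. Everything else is assembly of \cref{lem:rooted}, \cref{lem:root-summation}, and \cref{prop:kernel-positive}.
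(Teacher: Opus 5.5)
Your proposal is correct and follows the paper's proof essentially step for step. You obtain the identity from \cref{lem:rooted}, telescoping over $0\leq a\leq t$, and \cref{lem:root-summation}. You then get both nonnegativity claims from \cref{prop:kernel-positive} with the specializations $(w,\alpha,\beta)=(\mathbf1_{[1-v,1]},1,1-v)$ and $(0,1,0)$, and you include the same observation as the paper that $\Gamma_0$ differs from $\Gamma_{0,1,0}$.
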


\begin{proof}
\cref{lem:rooted} gives
\[
 F_e(a,t)=\sum_{s=0}^{q-1}h_s(t)\binom{a+s}{s},
 \qquad G_e(t,X)=\sum_{s=0}^{q-1}h_s(t)X^s.
\]
Summation over $0\leq a\leq t$ and telescoping give
\[
 \ehr_M(t)=\sum_{s=0}^{q-1}h_s(t)\binom{t+s+1}{s+1}
\]
for every positive integer $t$, hence as a polynomial identity.
\cref{lem:root-summation} gives \eqref{eq:root-goal}.
Each coefficient of $\Gamma_v(G_e)$ is polynomial in $v$;
coefficient extraction commutes with its Riemann integral because
the sum is finite.

\cref{prop:kernel-positive} proves both nonnegativity
assertions: use $w=\mathbf1_{[1-v,1]}$, $\alpha=1$, $\beta=1-v$
for $\Gamma_v(G_e)$, as in \eqref{eq:gamma-specialization}, and
$w=0$, $\alpha=1$, $\beta=0$ for
$G_e(t,1)=\Gamma_{0,1,0}(G_e)$.
The specialization $\Gamma_{0,1,0}$ differs from $\Gamma_v$ at $v=0$:
since $\Gamma_0(X^s)=[\lambda^s](1-\lambda)^{-2}=s+1$,
\[
 \Gamma_0(G)=G(t,1)+\partial_XG(t,1),
 \qquad \Gamma_{0,1,0}(G)=G(t,1)
\]
for every polynomial $G(t,X)$.
\end{proof}

The local term counts the coordinate face $x_e=0$:
\[
 G_e(t,1)=F_e(0,t)=\ehr_{M\setminus e}(t).
\]
The first equality is the binomial expansion at $a=0$.
For the second, connectedness and $|E|>1$ imply that $e$ is not
a coloop, so the bases of $M\setminus e$ are the bases of $M$
avoiding $e$. Deleting the $e$-coordinate is therefore a lattice
isomorphism from $P(M)\cap\{x_e=0\}$ to $P(M\setminus e)$.
Their lattice-point counts agree at every positive integer dilation,
which proves the polynomial identity.

To analyze $\Gamma_v(G_e)$ for $0<v<1$, use the following
complementary profile for \eqref{eq:gamma-specialization}:
\begin{equation}\label{eq:root-profile}
 m(u)=
 \begin{cases}
  1-\{u\},&u\notin\Z,\\
  0,&u\in\Z,
 \end{cases}
\end{equation}
where $\{u\}$ is the fractional part. The substitution $x=1-u$
proves period-uniformity and gives
\[
 \int_0^{1-v}g(m(u))\,du=\int_v^1g(x)\,dx
\]
for every continuous $g:[0,1]\to\R$. This is the required
complementary-profile identity with $L=v$; the endpoint values are
$m(0)=0$ and $m(1-v)=m(r-v)=v$.

\begin{example}[\cref{ex:four-element}, continued]
Fix $0<v<1$ and use the profile in \eqref{eq:root-profile}.
The total jump is $2-v$, and the allowed original paths satisfy
$d_1+d_2\leq1$. Group their contributions by the number $h$
of continuous steps.

If $h=1$, there is one unit jump, one zero jump, and one continuous
jump of length $1-v$. A zero jump at the start or immediately
after an initial unit jump has weight zero. Placing both the unit
and continuous jumps in the first two coordinates violates the
constraint. The only remaining path is
\[
 (d_1,d_2,d_3)=(1-v,0,1),
\]
whose frozen weights have product $v(1-v)$.

If $h=2$ and the frozen jump is zero, placing it first gives weight
zero, while placing it third violates the constraint.
Placing it second contributes
\[
 \int_{1-v}^{1}(1-x)\,dx=\frac{v^2}{2}.
\]
If the frozen jump is one, it must be in the third coordinate.
The first two jumps sum to $1-v$, and the unit-jump weight is
$1-v$, giving the contribution $(1-v)^2$.

If $h=3$, set $s=d_1+d_2$. The final-jump equation
$d_3=2-v-s$ and the constraint require $1-v<s\leq1$.
For each $s$, the choices $0<d_1<s$ have length $s$, so the
contribution is
\[
 \int_{1-v}^{1}s\,ds=v-\frac{v^2}{2}.
\]
Since $h$ continuous steps give degree $h-1$ after division by $t$,
the allowed original kernel is
\[
 \frac{K_{\left.\Aop^3\right|_{d_1+d_2\leq1}}(0,2-v)}t
 =v(1-v)
  +\left((1-v)^2+\frac{v^2}{2}\right)t
  +\left(v-\frac{v^2}{2}\right)t^2.
\]
The normalized kernel contribution of the inserted boundary correction is
$Q_{2,1}(t,m(0))=t+1$, since the third coordinate makes the
continuous jump $1-v$. Hence
\[
 \Gamma_v(G_e)
 =(1+v-v^2)
  +\left(2-2v+\frac32v^2\right)t
  +\left(v-\frac12v^2\right)t^2.
\]
This polynomial identity extends to $v=0,1$ by continuity;
the path calculation used only noninteger total jumps.
Finally, $G_e(t,1)=t+1$, and \cref{prop:closing} gives
\[
 \ehr_M(t)=t+1+t\int_0^1\Gamma_v(G_e)\,dv
          =1+\frac{13}{6}t+\frac32t^2+\frac13t^3.
\]
The inserted boundary correction contributes $t+t^2$ to the
Ehrhart polynomial.
\end{example}

\subsection{Positivity in every degree}

An original path with $h$ continuous steps contributes degree $h-1$
after division of its kernel by $t$. We prove positivity in every
degree by finding allowed paths with each number of continuous steps
and positive frozen weights. For \eqref{eq:root-profile}, $m(0)=0$
kills an initial frozen zero step. We keep the first nonroot
coordinate $f=1$ continuous and use connected coordinate faces to
avoid integer prefixes before subsequent frozen steps.

For a basis $B$ and an element $f\notin B$, the
\emph{fundamental circuit of $f$ with respect to $B$} is the
unique circuit contained in $B\cup\{f\}$
\cite[Corollary~1.2.6]{Oxley2011}.
The fundamental bipartite graph has vertex set $E$ and
bipartition $B,E\setminus B$; an edge joins $b\in B$ to
$f\notin B$ when $B\setminus\{b\}\cup\{f\}$ is a basis.
Equivalently, $b$ belongs to the fundamental circuit of $f$.
This graph is connected if and only if the matroid is connected
\cite[Proposition~4.3.2]{Oxley2011}. Any two distinct elements of a
connected matroid belong to a common circuit
\cite[Proposition~4.1.3]{Oxley2011}.

\begin{lemma}\label{lem:faces}
Let $M$ be connected on $\{e,1,\ldots,q\}$, where $q\geq1$,
and let $f=1$.
There are nested sets
\[
 \{e,f\}=S_1\subset S_2\subset\cdots\subset S_q=E,
 \qquad |S_h|=h+1,
\]
and, for each $h=1,\ldots,q$, a coordinate face of $P(M)$ whose
unfixed coordinates are $S_h$ and form a connected matroid base
polytope. For every $0<v<1$,
the slice $x_e=v$ of each such face has a nonempty relatively open
part with all other unfixed coordinates in $(0,1)$, of dimension $h-1$.
\end{lemma}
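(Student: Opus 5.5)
We build the faces by fixing coordinates one element at a time to $0$ or $1$. Concretely, we will construct a chain of connected minors $M_h$ on $S_h$, where $M_q=M$ and $M_{h}$ arises from $M_{h+1}$ by deleting or contracting one element $g_{h+1}\in S_{h+1}\setminus S_h$. Deletion corresponds to fixing $x_{g}=0$, and contraction corresponds to fixing $x_g=1$. The face of $P(M)$ cut out by these fixed coordinates is then a lattice copy of $P(M_h)$ on the unfixed coordinates $S_h$. This holds provided $g$ is neither a loop nor a coloop of the current minor, so that the hyperplane $x_g=0$ (resp.\ $1$) supports a nonempty face whose bases are exactly those of $M_{h+1}\setminus g$ (resp.\ $M_{h+1}/g$). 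Connectedness of $M_{h+1}$ with at least two elements guarantees this.

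The key step is the standard fact that for a connected matroid $N$ and any element $g$, at least one of $N\setminus g$ and $N/g$ is connected (Tutte; \cite[Theorem~4.3.1]{Oxley2011}). We proceed downward from $S_q=E$. At stage $h+1\geq 3$, we choose any $g\in S_{h+1}\setminus\{e,f\}$, which is possible since $|S_{h+1}|\geq 3$, and we delete or contract it according to which result is connected. The process ends at a connected minor on $\{e,f\}$. A connected matroid on two elements is $U_{1,2}$, so the final face is the segment $x_e+x_f=1$, of dimension $1$. In general, since each $M_h$ is connected, $\dim P(M_h)=|S_h|-1=h$ by the dimension formula $\dim P=|E|-c$, and the nesting $|S_h|=h+1$ is automatic. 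Because the fixed coordinates are $0$ or $1$ and the remaining face is itself a matroid base polytope $P(M_h)$, the face is a coordinate face in the required sense.

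For the slice statement, we apply the following to each connected $N=M_h$ with $h\geq1$. The relative interior of $P(N)$ has dimension $h$. Since $N$ is connected and has no loops or coloops, no coordinate is constant on $P(N)$: every $x_g$ attains both $0$ and $1$. Therefore the relative interior lies in $\{0<x_g<1\}$ for all $g$, because a coordinate equal to $0$ or $1$ at a relative interior point would have to be constant on the whole polytope. The projection $x\mapsto x_e$ maps the relative interior onto $(0,1)$. For each $0<v<1$, the hyperplane $x_e=v$ therefore meets the relative interior, transversally, since $x_e$ is nonconstant on the affine hull. The intersection is relatively open of dimension $h-1$, with all other unfixed coordinates in $(0,1)$.

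We expect the main obstacle to be making the choice of deletion versus contraction compatible with the paper's later use. In particular, the subsequent argument seems to want the frozen coordinates ordered so that prefixes avoid integer sums, keeping $f=1$ continuous. The lemma as stated only asks for existence, however, so Tutte's theorem suffices here. We will take care only to never remove $e$ or $f$. We also need the face identification $P(M)\cap\{x_g=0\}\cong P(M\setminus g)$ for a non-coloop $g$, and $P(M)\cap\{x_g=1\}\cong P(M/g)$ for a non-loop $g$. These come from the description of faces via $x(A)=r_M(A)$ (with $A=\{g\}$ or $E\setminus\{g\}$), as quoted from \cite[Proposition~5.3]{ARW2016}.
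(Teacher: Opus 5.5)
Your proof is correct, but it builds the nested connected minors by a different route from the paper. The paper fixes one basis $B$, chosen through a circuit containing $e$ and $f$ so that $ef$ is an edge of the fundamental bipartite graph of $B$. It then grows connected induced subgraphs $S_1\subset\cdots\subset S_q$ from this edge. Connectivity of $N_h=(M/(B\setminus S_h))|S_h$ follows because its fundamental graph with respect to $B\cap S_h$ is the induced subgraph on $S_h$. You instead remove elements of $E\setminus\{e,f\}$ one at a time, top-down, using Tutte's theorem that $N\setminus g$ or $N/g$ is connected. Each step is identified with the face $x_g=0$ or $x_g=1$ of the current polytope, which is nonempty because a connected minor on at least two elements has no loops or coloops. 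Since a face of a face is a face, the result is a coordinate face of $P(M)$.

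Your route needs only one classical theorem and no auxiliary circuit or basis, and its face identification is elementary. The paper's route instead describes all $q$ faces uniformly from a single basis: the fixed values are dictated by membership in $B$, and every face contains $\mathbf{1}_B$. In your construction the fixed values are chosen adaptively.

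For the slice, the paper constructs explicit points $x(v)$ by interpolating the barycenter of the basis indicators with the vertices $\mathbf{1}_{D_0}$ and $\mathbf{1}_{D_1}$. You assert that $x_e$ maps the relative interior onto $(0,1)$. This is the standard fact that a linear map sends the relative interior onto the relative interior of the image, here $[0,1]$. You should cite it or justify it by that same segment argument.

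Your concern about compatibility with the later argument is unnecessary. The positivity proposition uses only $e,f\in S_h$, connectedness of the minor, and the slice dimension. It handles integer prefixes by a separate genericity argument inside the slice.
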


\begin{proof}
\emph{Coordinate faces and connected minors.}
Choose a circuit containing $e,f$ and extend the independent set
obtained by removing $f$ to a basis $B$. This basis avoids $f$,
and the chosen circuit is the unique circuit in $B\cup\{f\}$.
Thus $ef$ is an edge of the connected fundamental graph.
Starting with this edge and successively adding adjacent vertices
gives the sets $S_h$ with connected induced subgraphs.

The coordinate face
\[
 x_b=1\quad(b\in B\setminus S_h),\qquad
 x_g=0\quad(g\in(E\setminus B)\setminus S_h)
\]
contains $\mathbf1_B$. Removing its fixed coordinates identifies
it with $P(N_h)$, where $N_h=(M/(B\setminus S_h))|S_h$.
For $b\in B\cap S_h$ and $g\in S_h\setminus B$,
\begin{align*}
(B\cap S_h)\setminus\{b\}\cup\{g\}\text{ is a basis of }N_h\quad \Longleftrightarrow\quad B\setminus\{b\}\cup\{g\}\text{ is a basis of }M.
\end{align*}
Indeed, the contraction basis correspondence
\cite[Corollary~3.1.8]{Oxley2011} gives the equivalence before
deletion. The basis $B\cap S_h$ survives deletion, preserving
rank, and the exchanged set contains no deleted element.
The fundamental graph of $N_h$ is therefore the connected induced
graph on $S_h$. Hence $N_h$ is connected, $\dim P(N_h)=h$,
and its affine hull is $x(S_h)=r(N_h)$.

\smallskip\noindent
\emph{Root slices.}
Let $\bar x$ be the average of all basis indicators of $N_h$.
A supporting functional maximal at $\bar x$ is maximal at every
basis indicator, hence constant on $P(N_h)$; thus $\bar x$ is
in its relative interior. Since $N_h$ is connected and has at least two elements, it has
neither loops nor coloops; every coordinate of $\bar x$ lies in $(0,1)$.
Set $v_0=\bar x_e$ and choose bases $D_0,D_1$ with
$e\notin D_0$ and $e\in D_1$. For $0<v<1$, set
\[
 x(v)=
 \begin{cases}
 \dfrac{v}{v_0}\bar x+
       \left(1-\dfrac{v}{v_0}\right)\mathbf1_{D_0},
                                      &0<v<v_0,\\[3pt]
 \bar x,                             &v=v_0,\\[3pt]
 \dfrac{1-v}{1-v_0}\bar x+
       \dfrac{v-v_0}{1-v_0}\mathbf1_{D_1},
                                      &v_0<v<1.
 \end{cases}
\]
Then $x_e(v)=v$. The positive coefficient of $\bar x$ keeps
$x(v)$ in the relative interior with all coordinates in $(0,1)$.
Since $x_e$ is nonconstant on the $h$-dimensional affine hull,
its level set has dimension $h-1$. A relative neighborhood of
$x(v)$ in that level set remains in the polytope and in $(0,1)^{S_h}$.
Restoring the fixed coordinates gives the required root slice;
for $h=1$ it is a point.
\end{proof}

\begin{proposition}\label{prop:all-degrees}
Let $M$ be the connected paving positroid in \cref{lem:rooted}. For every $0<v<1$, each coefficient of
$\Gamma_v(G_e)$ in degrees $0,\ldots,q-1$ is positive.
\end{proposition}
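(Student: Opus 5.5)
The plan is to use the decomposition from the proof of \cref{prop:kernel-positive}, specialized through \eqref{eq:gamma-specialization} with the profile \eqref{eq:root-profile} and $L=v$. This gives
\[
 t\,\Gamma_v(G_e)=K_{\left.\Aop^q\right|_{\mathrm{allowed}}}(0,r-v)+\sum_{i=1}^{\ell}K_{\mathcal C_i}(0,r-v).
\]
Every summand has nonnegative coefficients, so it suffices to show that the allowed original kernel alone has a positive coefficient of $t^h$ for each $h=1,\ldots,q$. A path with exactly $h$ continuous steps contributes $t^h$ times a nonnegative integral over its domain. So for each $h$ we will exhibit one jump pattern whose allowed region has positive $(h-1)$-dimensional measure and whose frozen weights are positive on a positive-measure subset.

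The paths come from \cref{lem:faces}. Its face for $S_h$ fixes $x_b=1$ for $b\in B\setminus S_h$ and $x_g=0$ for the remaining elements of $E\setminus S_h$. Its slice at $x_e=v$ contains a relatively open set $U_h$ of dimension $h-1$ in which the coordinates indexed by $S_h\setminus\{e\}$ lie in $(0,1)$. We read a point of $U_h$ as a path: coordinate $j\in S_h\setminus\{e\}$ is a continuous jump $d_j=x_j$, and each other nonroot coordinate is a frozen zero or unit jump given by its fixed value. Under $y_e=L=v$, $y_j=d_j$, this is exactly the correspondence used in \cref{lem:disjoint-paths}. Since the face lies in $P(M)$, every point satisfies all the inequalities of \cref{lem:geometry}, so no forbidden condition holds and the path is allowed. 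The total jump is $x(E)-v=r-v$, as required. Eliminating the last continuous jump identifies $U_h$, locally and affinely with unit Jacobian, with an open set of the integration domain $\Delta_{h,j}$ of \cref{lem:coordinate-kernels}. Hence this domain has positive measure.

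The main obstacle is positivity of the frozen weights, because $m$ vanishes at integers. A frozen zero step at position $i$ has weight $m(u_i)$, where $u_i=d_1+\cdots+d_{i-1}$, and a frozen unit step has weight $1-m(u_i)$. With \eqref{eq:root-profile}, $m(u)=1-\{u\}$ off the integers, so at a noninteger prefix both weights lie in $(0,1)$. At an integer prefix the zero-step weight vanishes, while the unit-step weight is $1$. Since $f=1\in S_h$ is the first nonroot coordinate, it is always continuous with $d_1\in(0,1)$. Every later prefix sum is then an affine function of the continuous jump variables in which $d_1$ appears with coefficient one. It therefore equals an integer only on a proper hyperplane, which has measure zero in the open $(h-1)$-dimensional region. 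This remains true when $h=1$: the only continuous jump is then $d_1=1-v$, because the frozen jumps are integers and the total is $r-v$. All later prefixes are then $1-v$ plus an integer, which is not an integer since $0<v<1$. Off this null set the integrand is a product of factors in $(0,1]$, so the integral over $U_h$ is strictly positive.

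Finally, different $h$ give different powers of $t$. For each $h$, the coefficient of $t^{h-1}$ in $\Gamma_v(G_e)$ is at least the contribution of this one pattern, because all other path contributions and all corrections $K_{\mathcal C_i}$ are nonnegative by \cref{lem:coordinate-kernels} and \cref{lem:upper-operator,lem:lower-operator}. Taking $h=1,\ldots,q$ gives positivity in degrees $0,\ldots,q-1$. One point needs a careful check here: the identification of $U_h$ with the jump-variable domain must keep the restriction to coordinates in $(0,1)$ for the continuous steps, which \cref{lem:faces} provides.
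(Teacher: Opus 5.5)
Your proof follows the paper's route: the decomposition from \cref{prop:kernel-positive} with the profile \eqref{eq:root-profile}, faces from \cref{lem:faces} read as allowed paths, the continuous first step $f=1$ to keep frozen weights positive, and a separate treatment of $h=1$. One step is not justified as written, though. Take $h\geq2$ and a frozen step that comes after all $h$ continuous steps. Its starting point is $\sum_{k=1}^{h}y_k$ plus an integer, which equals $r-v-j$ plus an integer on the slice. After you eliminate the last continuous jump, this no longer involves $d_1$ at all. Its integer level sets are hyperplanes parallel to the affine hull $\sum_k y_k=r-v-j$ of your region. So the inference ``coefficient one in $d_1$, hence a proper hyperplane, hence measure zero in the region'' does not apply here: a parallel hyperplane that met the region would contain all of it.

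The missing fact is the one you used only for $h=1$. Such a starting point equals $r-v-j'$ for some integer $j'$, so it is never an integer, because $0<v<1$. For frozen steps preceded by only $a<h$ continuous steps, your coefficient-one argument is valid once it is phrased in the free variables $y_1,\ldots,y_{h-1}$. Equivalently, increasing $y_1$ and decreasing $y_h$ by the same amount stays on the affine hull but changes the prefix; this is how the paper states it. With this case split added, your argument coincides with the paper's proof.
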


\begin{proof}
\emph{Allowed paths and integer prefixes.}
Use \eqref{eq:root-profile} and fix $1\leq h\leq q$. \cref{lem:faces} gives a coordinate face with $h$ unfixed
nonroot coordinates, including $f=1$. Write their jumps in order
as $y_1,\ldots,y_h$, and let $j$ count the frozen unit jumps.
Their root slice satisfies
\[
 \sum_{k=1}^{h}y_k=r-v-j,\qquad
 0<y_k<1\quad(1\leq k\leq h).
\]
Choose the integration term at these coordinates and the zero or
unit evaluation at every fixed coordinate. All points of this
slice satisfy the hyperplane inequalities, so this path term is
allowed there. Connectedness of the minor makes the displayed
sum its only affine equation.

The first step is continuous. Every frozen step therefore starts
at an integer plus a nonempty prefix $\sum_{k=1}^{a}y_k$.
For $a=h$ this is $r-v-j\notin\Z$. For $1\leq a<h$,
increasing a prefix coordinate and decreasing one outside it shows
that the prefix is nonconstant on the fixed-sum affine hull.
Since it lies in $(0,a)$, the conditions that it be integral form
finitely many proper affine hyperplanes. For $h=1$ there are
no proper nonempty prefixes.

\smallskip\noindent
\emph{A positive integral for $h\geq2$.}
These hyperplanes cannot cover the relatively open slice: a line
with direction parallel to none of them meets each at most once,
so a short segment in the slice contains a point outside them all.
At this point every frozen step starts at a noninteger, and both
weights $m(u)$ and $1-m(u)$ are positive.

Eliminate the last continuous coordinate by
$y_h=r-v-j-\sum_{k=1}^{h-1}y_k$. Its coefficient in the total-jump
equation is $1$, so changing to the final point has derivative $1$.
Let $\Omega\subseteq(0,1)^{h-1}$ be the allowed domain of the
remaining integration variables and $\omega$ the product of the
frozen weights after elimination. Projection onto these variables
is an affine bijection from the fixed-sum affine hull to
$\R^{h-1}$, so the relatively open root slice projects to an open
subset of $\Omega$. Only these continuous coordinates are integrated;
the evaluation factors fix the remaining coordinates.

A sufficiently small closed box
$Q=\prod_{k=1}^{h-1}[a_k,b_k]$ about the chosen point lies in
$\Omega$, has $a_k<b_k$, and avoids the exceptional hyperplanes.
The frozen weights are continuous and positive on $Q$, so
$\omega\geq\eta>0$ there. Hence
\[
 \int_{\Omega}\omega(y_1,\ldots,y_{h-1})
                         \,dy_1\cdots dy_{h-1}
 \geq\eta\prod_{k=1}^{h-1}(b_k-a_k)>0.
\]
With no frozen steps the product is $1$. Thus the selected term
has a positive coefficient of $t^h$ in the allowed kernel.

\smallskip\noindent
\emph{One continuous step and the coefficients.}
For $h=1$, the continuous step is $f=1$ and
$y_1=r-v-j\in(0,1)$ forces $j=r-1$, $y_1=1-v$.
Every later starting point has fractional part $1-v$.
The $q-r$ frozen zero jumps have weight $v$ and the $r-1$
frozen unit jumps have weight $1-v$, giving the positive
coefficient $v^{q-r}(1-v)^{r-1}$ of $t$.

For each $h$, division by $t$ gives a positive contribution in
degree $h-1$. All other allowed terms and boundary corrections
have nonnegative coefficients by \cref{prop:kernel-positive}. Therefore
\[
 [t^{h-1}]\Gamma_v(G_e)>0\qquad(1\leq h\leq q).
\]
This completes the proof.
\end{proof}

\begin{proof}[Proof of \cref{thm:main}]
The empty-ground-set and singleton cases have point base polytopes
and Ehrhart polynomial $1$. Suppose $M$ is connected with at least
two elements. In rank one or corank one it is uniform, so
\cite[Theorem~1.2]{Ferroni2021} applies. Otherwise
$2\leq r\leq n-2$ and $q=n-1$.
For $1\leq j\leq q$, \cref{prop:all-degrees} makes
$[t^{j-1}]\Gamma_v(G_e)$ positive on $(0,1)$.
This coefficient is polynomial in $v$ and hence continuous;
it has a positive lower bound on some closed interval in $(0,1)$,
so its integral is positive. \cref{prop:closing},
including nonnegativity of the local term, gives
$[t^j]\ehr_M(t)>0$. Ehrhart's theorem gives $\ehr_M(0)=1$,
and connectedness gives $\dim P(M)=q$.

For general $M$, its connected components $M_1,\ldots,M_c$ are
positroids in the induced orders, being restrictions
\cite[Proposition~3.5]{ARW2016}. Every circuit of $M_i$ is a
circuit of $M$, of size at least $r(M)\geq r(M_i)$; hence each
$M_i$ is paving. Their Cartesian product decomposition gives
\[
 \ehr_M(t)=\prod_{i=1}^{c}\ehr_{M_i}(t),
 \qquad \dim P(M)=\sum_{i=1}^{c}\dim P(M_i).
\]
Each factor has positive coefficients throughout its degree range.
Every integer between zero and the sum of these degrees is a sum
of integers from the respective ranges. Its coefficient in the
product therefore contains a positive product, and all remaining
summands are nonnegative.
\end{proof}

\begin{remark}
The proof requires a common cyclic order. The open caps are disjoint
for every paving matroid, but its dependent hyperplanes need not be
blocks in one order. Such an order is needed to interpret all the
single-block identities as restrictions of the same path integration
domains.
\end{remark}

%\noindent\textbf{Ethical approval:} Not applicable.

%\noindent\textbf{Conflict interest:} None.

%\noindent\textbf{Declaration of competing interest:} The authors declared that they had no conflicts of interest with respect to %their authorship or the publication of this article.

%\noindent\textbf{Data availability:} Data availability is not applicable to this article as no new data were created or analyzed in %this study.

%\noindent\textbf{Funding:} This work was partially supported by the National Natural Science Foundation of China***********.

\vspace{5mm}
\noindent
{\small \textbf{Acknowledgments:}}
%The authors would like to thank the anonymous referees for their valuable suggestions for improving the presentation.
Feihu Liu was partially supported by the Postdoctoral Fellowship Program and China Postdoctoral Science Foundation (Grant No. BX2026002).

%\section*{Declaration of AI Assistance}
\noindent{\small \textbf{Declaration of AI Assistance:}}
During the preparation of this manuscript, the author used ChatGPT to assist in exploring possible approaches, checking technical details, and improving the exposition. All mathematical arguments, computations, proofs, and results were independently verified by the author. The author take full responsibility for the content of this manuscript.

\end{document}